\documentclass[11pt]{article}
\input{preamble.tex}

\bibliography{refs}

\title{Dirichlet Functors are Contravariant Polynomial Functors}
\author{David Jaz Myers \and David I.\ Spivak}
\date{April 2020}

\begin{document}

\maketitle

\begin{abstract}
  Polynomial functors are sums of covariant representable functors from the
  category of sets to itself. They have a robust theory with many applications
  --- from operads and opetopes to combinatorial species. In this paper, we
  define a contravariant analogue of polynomial functors: Dirichlet functors. We
  develop the basic theory of Dirichlet functors, and relate them to their
  covariant analogues.
\end{abstract}

\section{Introduction}
A polynomial functor $P : \Set \to \Set$ is a sum of representables
\begin{equation}\label{eqn.poly}
P(X) := \sum_{b \in B} X^{E_b}
\end{equation}
where the family of sets $E_b$ depends on $b\in B$. This data is known
in the computer science literature as a ``container'' \cites{abbott2003categories}{abbott2005containers}{abbot2003categoriesthesis}, but since such an indexed
family of sets can be represented by as a function
\[
  \begin{tikzcd}
    E \arrow[d, "\pi"] \\
    B
  \end{tikzcd}
\]
we will refer to it as a \emph{bundle}.

Remarkably, all natural transformations between polynomial functors can be
represented in terms of their associated bundles. A natural transformation $P
\to P'$ corresponds to a dependent pair of functions
\[
\left( f : B \to B',\, f^{\sharp} : \dprod{b : B} E'_{fb} \to E_b \right)
\]
noting that $f$ acts covariantly on the base and $f^\sharp$ acts contravariantly on
fibers. 
In terms of bundles, a map $\pi\to\pi'$ is a diagram of the following sort:
\[
  \begin{tikzcd}
    E \arrow[d, "\pi"'] \arrow[r, leftarrow, "f_{\sharp}"] & \bullet \arrow[d] \arrow[r]
    \arrow[dr, phantom, "\ulcorner" very near start] & E' \arrow[d, "\pi'"] \\
    B \arrow[r, equals] & B \arrow[r, "f"'] & B'
  \end{tikzcd}
\]
We refer to these special spans as \emph{contravariant} morphisms of bundles.

\begin{thm}[Theorem 2.17 of \cite{kock2012polynomial}]
The category of polynomial functors and natural transformations is equivalent to
the category of bundles and contravariant morphisms.
\end{thm}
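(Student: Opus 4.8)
The plan is to exhibit the evident functor from bundles-with-contravariant-morphisms to polynomial-functors-with-natural-transformations, and then prove it is essentially surjective (trivial) and fully faithful, with the real content of the latter being a direct application of the Yoneda lemma together with the fact that a polynomial functor is a coproduct of representables.

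First I would make the functor explicit. To a bundle $\pi \colon E \to B$ assign the polynomial functor $P_\pi(X) = \sum_{b \in B} X^{E_b}$, and to a contravariant morphism — the span displayed above, consisting of $f \colon B \to B'$ together with the comparison map out of the pullback, equivalently the fibrewise family $f^\sharp_b \colon E'_{f(b)} \to E_b$ — assign the natural transformation whose component at $X$ sends $(b, \varphi \colon E_b \to X)$ to $(f(b),\ \varphi \circ f^\sharp_b)$. Essential surjectivity is immediate, since every polynomial functor is of this form by definition.

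For full faithfulness, fix bundles $\pi \colon E \to B$ and $\pi' \colon E' \to B'$. Write $P_\pi = \sum_{b \in B} \yo^{E_b}$ as a coproduct, in the functor category $[\Set,\Set]$, of representables $\yo^{E_b} = \Hom(E_b, -)$; this is legitimate because coproducts in $[\Set,\Set]$ are computed pointwise. Then the set of natural transformations satisfies
\[
  \Hom(P_\pi, P_{\pi'}) \;=\; \prod_{b \in B} \Hom\big(\yo^{E_b}, P_{\pi'}\big) \;\cong\; \prod_{b \in B} P_{\pi'}(E_b),
\]
where the first equality is the universal property of the coproduct and the isomorphism is the Yoneda lemma. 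Unfolding, $P_{\pi'}(E_b) = \sum_{b' \in B'} (E_b)^{E'_{b'}} = \sum_{b' \in B'} \Hom(E'_{b'}, E_b)$, so an element of $\Hom(P_\pi, P_{\pi'})$ is precisely a function $f \colon B \to B'$ together with, for each $b \in B$, a function $f^\sharp_b \colon E'_{f(b)} \to E_b$ — that is, a dependent pair $\big(f,\, f^\sharp \colon \dprod{b : B} E'_{fb} \to E_b\big)$ of the stated form, which is exactly the data of a contravariant morphism of bundles (the span being recovered by pulling $\pi'$ back along $f$). Chasing this identification through the Yoneda isomorphism confirms it is inverse to the assignment of the previous paragraph.

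It remains to check that this assignment respects identities and composition, so that it is genuinely a functor and hence, being bijective on hom-sets and surjective on objects, an equivalence. Identities are clear. For composition, one verifies that pasting two contravariant spans — i.e.\ taking a further pullback — induces the composite natural transformation: if the first morphism contributes $(f, f^\sharp)$ and the second $(g, g^\sharp)$, the pasted span contributes $\big(g \circ f,\ b \mapsto f^\sharp_b \circ g^\sharp_{f(b)}\big)$, and one checks this agrees with the componentwise composite $(b, \varphi) \mapsto \big(g(f(b)),\ \varphi \circ f^\sharp_b \circ g^\sharp_{f(b)}\big)$. I expect this last bookkeeping — matching pullback-pasting of spans with composition of the associated dependent pairs while keeping the two variances straight — to be the only real obstacle; the universal property of the pullback makes the pasted span well-defined up to canonical isomorphism, so once the notation is fixed the verification is routine, and everything else in the argument is formal.
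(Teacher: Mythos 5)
Note first that the paper offers no proof of this statement: it is imported as background from Kock's notes, so there is no internal argument to compare yours against. That said, your proof is correct, and it is the standard one. The functor is the right one, naturality of each component is immediate, and the chain
\[
\Hom(P_\pi,P_{\pi'})\;\cong\;\prod_{b\in B}\Hom\bigl(\yo^{E_b},P_{\pi'}\bigr)\;\cong\;\prod_{b\in B}\sum_{b'\in B'}\Hom(E'_{b'},E_b)\;\cong\;\sum_{f\colon B\to B'}\;\prod_{b\in B}\Hom(E'_{f(b)},E_b)
\]
(coproduct universal property, Yoneda, then distributivity of products over sums) does establish the bijection on hom-sets. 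The step you leave implicit --- that this bijection inverts your explicit assignment --- is genuinely routine: Yoneda evaluates a transformation at $\id_{E_b}$, your transformation sends $(b,\id_{E_b})$ to $(f(b),f^\sharp_b)$, and naturality then forces the general formula $\alpha_X(b,\varphi)=(f(b),\varphi\circ f^\sharp_b)$, so every natural transformation arises uniquely from a dependent pair. Your variance bookkeeping in the composition check is also correct. It is worth observing that your route differs in spirit from the one the paper takes for the contravariant analogue (\cref{thm:dirichlet.set.characterization}): there the authors never decompose into representables, but instead exhibit the extent functor as a right Kan extension along the fully faithful $!_0\op\circ\sigma$ and deduce full faithfulness from the counit of the $\ev_{!_0}\vdash\ext$ adjunction being an isomorphism, with the inverse equivalence identified as evaluation at the bundle $!_0$. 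Your Yoneda-plus-coproducts argument is more elementary and keeps the bundle data explicit; the Kan-extension argument is what lets the paper characterize the essential image intrinsically (preservation of connected limits, with no accessibility hypothesis) and is the version that transports to more general settings.
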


This begs the question: what, then, are we to make of the more obvious,
\emph{covariant} morphisms of bundles
\[
  \begin{tikzcd}[column sep=large]
E \arrow[r, "\term{tot}(f_{\sharp})"] \arrow[d, "\pi"'] & E' \arrow[d, "\pi'"] \\
B \arrow[r, "f"']                                       & B'                  
\end{tikzcd}
\]
for which $f_\sharp$ is covariant
in each fibers?

It turns out that these covariant maps of bundles correspond to natural
transformations between the appropriate sums of \emph{contravariant}
representables:
$$D(X) := \sum_{b \in B} E_b^X.$$

Polynomial functors as in \eqref{eqn.poly} get their name from the case in which
$B$ and $E$ are finite sets. Consider the function $\term{card}~E_{(-)} : B \to
\Nb$, which takes the cardinality of each fiber $E_b$. Letting $B_n :=
(\term{card}E_{(-)})\inv(n)$, i.e.\ $B_n$ is the set of elements whose fibers have $n$ elements,
we find that the for any set $X$, the cardinality
$$|P(X)| = \sum_{n \in \Nb} |B_n||X|^n$$
is a polynomial in the cardinality of $X$. Similarly,
$$|D(X)| = \sum_{n \in \Nb} |B_n|n^{|X|}$$
resembles a Dirichlet series in the cardinality of $X$ -- without the usual
negative sign.\footnote{Future work of the first author will recover the
  negative sign by generalizing the theory of Dirichlet functors to homotopy
  types.} Accordingly, we call such
sums of contravariant representables \emph{Dirichlet functors}.

We will show in this paper that Dirichlet functors are, quite robustly, the
contravariant analogue of polynomial functors. In particular, the many equivalent ways to
say that a functor is polynomial have contravariant analogues.
\begin{thm}[See discussion in 1.18 of \cite{kock2012polynomial}]\label{thm:polynomial.set.characterization}
  Let $P : \Set \to \Set$ be a functor. Then the following are equivalent.
  \begin{enumerate}
  \item $P$ is polynomial.
  \item $P$ is the sum of covariant representables.
  \item There is a bundle $\pi : E \to B$ together with a natural isomorphism
    $$P(X) \cong \sum_{b \in B} X^{E_b}.$$
    Or, equivalently, a natural isomorphism of $P$ with the composite
    $$\Set \xto{{\Delta_{!_E}}} \Set_{/E} \xto{{\Pi_\pi}} \Set_{/B} \xto{{\Sigma_{!_B}}} \Set.$$
  \item $P$ is accessible and preserves connected limits.
  \end{enumerate}
\end{thm}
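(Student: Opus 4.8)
The plan is to prove $(1)\Leftrightarrow(2)\Leftrightarrow(3)$ by unwinding definitions and then prove the two nontrivial implications $(3)\Rightarrow(4)$ and $(4)\Rightarrow(3)$. For $(1)\Leftrightarrow(2)$: a covariant representable functor $\Set\to\Set$ is by definition one of the form $X\mapsto\Set(A,X)=X^A$, so a sum of covariant representables is precisely a functor of the shape \eqref{eqn.poly}. For $(2)\Leftrightarrow(3)$: a $B$-indexed family of sets $(E_b)_{b\in B}$ is the same data as the bundle $\pi\colon E\to B$ with $E:=\sum_{b\in B}E_b$ and $\pi$ the first projection, and identifying $X\mapsto\sum_{b\in B}X^{E_b}$ with the composite $\Sigma_{!_B}\,\Pi_\pi\,\Delta_{!_E}$ is the routine base-change computation: $\Delta_{!_E}$ carries $X$ to the projection $X\times E\to E$ (fibre $X$ over each point of $E$), $\Pi_\pi$ forms the fibrewise product to give the object of $\Set_{/B}$ with fibre $\prod_{e\in E_b}X=X^{E_b}$ over $b$, and $\Sigma_{!_B}$ takes the total set $\sum_{b\in B}X^{E_b}$. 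I expect all of this to be bookkeeping.

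For $(3)\Rightarrow(4)$, write $P(X)=\sum_{b\in B}X^{E_b}$. Each summand $X\mapsto X^{E_b}$ is representable, hence preserves all limits and preserves $\kappa$-filtered colimits for every regular $\kappa>|E_b|$; since coproducts in $\Set$ commute with all colimits, $P$ preserves $\kappa$-filtered colimits whenever $\kappa$ exceeds every $|E_b|$, so $P$ is accessible. For connected limits the key fact is that coproducts in $\Set$ commute with connected limits: the forgetful functor $\Set_{/B}\to\Set$ creates connected limits --- along a connected diagram the map from the underlying limit to $B$ is forced --- and under $\Set_{/B}\simeq\Set^B$ (with $B$ discrete) this forgetful functor is $\sum_{b\in B}(-)$. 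Since representables preserve all limits, it follows that $P(\lim_i X_i)=\sum_b(\lim_i X_i)^{E_b}=\sum_b\lim_i X_i^{E_b}=\lim_i\sum_b X_i^{E_b}=\lim_i P(X_i)$ for any connected diagram.

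For $(4)\Rightarrow(3)$ set $B:=P(1)$ and, for $b\in B$, let $P_b\colon\Set\to\Set$ send $X$ to the component of $P(X)$ lying over $b$, i.e.\ the fibre of $P(!_X)\colon P(X)\to P(1)$ over $b$. Because $!_{X'}\circ g=!_X$ for every $g\colon X\to X'$, the map $P(g)$ respects these components, so each $P_b$ is a functor and $P\cong\sum_{b\in B}P_b$ naturally in $X$. Since in $\Set$ coproducts commute with connected limits and with filtered colimits, and since passing to the component over $b$ commutes with connected limits (along a connected diagram the projections of any element of the limit all lie in the same component), each $P_b$ again preserves connected limits and is accessible. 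Moreover $P_b(1)$ is the fibre of $\id_{P(1)}$ over $b$, a singleton, so $P_b$ also preserves the terminal object; and a functor preserving connected limits together with the terminal object preserves all limits --- binary products are pullbacks over $1$, arbitrary products are cofiltered (hence connected) limits of finite products, and every limit is built from products and equalizers. Thus $P_b$ is an accessible, limit-preserving endofunctor of $\Set$, hence representable: by the adjoint functor theorem it has a left adjoint $L_b$, and then $P_b(X)\cong\Set(1,P_b X)\cong\Set(L_b 1,X)=X^{E_b}$ with $E_b:=L_b(1)$. Hence $P(X)\cong\sum_{b\in B}X^{E_b}$ naturally, witnessed by the bundle $\sum_{b\in B}E_b\to B$, which is $(3)$.

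The step I expect to be the main obstacle is the transfer of hypotheses in $(4)\Rightarrow(3)$: showing that ``$P$ preserves connected limits'' genuinely descends to each fibre functor $P_b$ --- this rests on coproducts commuting with connected limits in $\Set$ and on the relevant connected limits being computed on underlying sets --- and then on the fact that connected limits together with the terminal object generate all limits, so that $P_b$ becomes limit-preserving. Only then does the adjoint functor theorem apply, and it is exactly at that point that the accessibility hypothesis in $(4)$ is used, to supply the solution-set condition.
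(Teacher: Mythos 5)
Your argument is correct, but note that the paper does not actually prove this theorem: it is stated as background with a citation to Kock's notes (``see discussion in 1.18''), so there is no in-paper proof to match against. What you have written is essentially the classical argument: $(1)$--$(3)$ are definitional bookkeeping; $(3)\Rightarrow(4)$ uses that representables preserve all limits and that coproducts in $\Set$ commute with connected limits (your identification of $\sum_{b\in B}(-)$ with the forgetful functor $\Set_{/B}\to\Set$, which creates connected limits, is the right way to see this); and $(4)\Rightarrow(3)$ decomposes $P$ over $B:=P(1)$, checks that each fibre functor $P_b$ inherits preservation of connected limits and accessibility, upgrades ``connected limits plus terminal object'' to ``all limits,'' and invokes the adjoint functor theorem --- with accessibility correctly identified as the ingredient supplying the solution-set condition, which is exactly why the covariant statement needs an accessibility hypothesis while the Dirichlet analogue does not. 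It is worth observing that the technique the authors use for the contravariant analogue (\cref{thm:dirichlet.set.characterization}) is different in flavor: there the extent functor is a right Kan extension along evaluation at $!_0$, and the key step is that every set is the connected \emph{colimit} of the cone diagram $X^{\triangleleft}\to\,\down\,\xto{!_0}\Set$, so that a connected-limit-preserving $D$ is recovered from its restriction to $!_0$ with no adjoint functor theorem needed. Your route buys the classical covariant statement; theirs buys a cleaner, accessibility-free equivalence in the contravariant setting. The only points worth tightening in a written version are the verification that the fibre decomposition over $P(1)$ is respected by connected limits (every element of $\lim_i P(X_i)$ has all its components over a single $b$ precisely because the indexing category is connected) and the standing convention that the cofiltered poset of finite subsets used to build infinite products is nonempty, hence connected.
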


Analogously, we will prove the following theorem.
\begin{thm}
Let $D : \Set\op \to \Set$ be a contravariant functor. Then the following are
equivalent.
\begin{enumerate}
\item $D$ is Dirichlet.
\item $D$ is the sum of contravariant representables.
\item There is a bundle $\pi : E \to B$ together with a natural isomorphism
  $$D(X) \cong \sum_{b \in B} E_b^X.$$
  Or, equivalently, a natural isomorphism of $D$ with the composite
  $$\Set\op \xto{(\Delta_{!_B})\op} (\Set_{/B})\op \xto{\Set_{/B}(-, \pi)} \Set_{/B}
  \xto{\Sigma_{!_B}} \Set.$$
\item $D$ preserves connected limits.
\end{enumerate}
\end{thm}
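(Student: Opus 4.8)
The plan is to prove the chain of implications $(1)\Rightarrow(2)\Rightarrow(3)\Rightarrow(4)\Rightarrow(1)$, closely mirroring the structure for polynomial functors but dualizing every fiberwise operation. The implications $(1)\Leftrightarrow(2)$ should be essentially definitional or already established in the basic theory of Dirichlet functors developed earlier in the paper: a Dirichlet functor \emph{is} a sum of contravariant representables $\sum_{b\in B}E_b^{(-)}$, and conversely any such sum is visibly Dirichlet. For $(2)\Rightarrow(3)$, given $D(X)\cong\sum_{b\in B}E_b^X$, I would package the indexed family $(E_b)_{b\in B}$ as a bundle $\pi\colon E\to B$ and verify that the stated composite $\Set\op\xto{(\Delta_{!_B})\op}(\Set_{/B})\op\xto{\Set_{/B}(-,\pi)}\Set_{/B}\xto{\Sigma_{!_B}}\Set$ computes, on an object $X$, first the constant family $(X)_{b\in B}$ over $B$, then the fiberwise hom $\Set_{/B}(\Delta_B X,\pi)$ whose fiber over $b$ is $\Hom_{\Set}(X,E_b)=E_b^X$, and finally the sum $\sum_{b\in B}E_b^X$; this is a routine unwinding of the adjunctions $\Delta_{!_B}\dashv\ \cdots$ and the defining property of $\Set_{/B}(-,\pi)$.

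The implication $(3)\Rightarrow(4)$ is where the real content lies. Each of the three functors in the composite preserves connected limits: $\Sigma_{!_B}\colon\Set_{/B}\to\Set$ is a left adjoint but more importantly the forgetful functor from a slice, which creates all limits, hence preserves connected ones; $(\Delta_{!_B})\op\colon\Set\op\to(\Set_{/B})\op$ sends colimits in $\Set$ to limits in $(\Set_{/B})\op$, and since $\Delta_{!_B}$ (being a left adjoint, left Kan extension along $!_B\colon B\to 1$, i.e.\ $X\mapsto B\times X\to B$) preserves connected colimits --- indeed all colimits --- its opposite preserves connected limits; and the middle functor $\Set_{/B}(-,\pi)\colon(\Set_{/B})\op\to\Set_{/B}$, being a hom-functor $\Hom(-,\pi)$ out of an opposite category, turns colimits into limits, so it sends connected limits in $(\Set_{/B})\op$ (equivalently connected colimits in $\Set_{/B}$) to connected limits in $\Set_{/B}$. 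Composing, $D$ preserves connected limits. I should be slightly careful here: the key point is that a connected limit in $\Set\op$ is a connected colimit in $\Set$, and connected colimits commute with the relevant operations; the one subtlety is to confirm that $\Set_{/B}(-,\pi)$ really does convert connected colimits to connected limits, which follows because $\Set_{/B}$ is cartesian closed-ish enough --- more simply, because $\Hom_{\Set_{/B}}(\colim_i A_i,\pi)\cong\lim_i\Hom_{\Set_{/B}}(A_i,\pi)$ holds for \emph{all} (co)limits by the universal property of colimits.

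The hard part will be $(4)\Rightarrow(1)$: reconstructing the bundle from an arbitrary connected-limit-preserving $D\colon\Set\op\to\Set$. The strategy I would use is the standard representability trick. Connected limits include all nonempty products and all pullbacks/equalizers; in $\Set\op$ these are nonempty coproducts and pushouts/coequalizers in $\Set$. So $D$ sends nonempty coproducts in $\Set$ to products, and in particular $D(\coprod_{i\in I}1)\cong\prod_{i\in I}D(1)$ for nonempty $I$. Set $B:=D(1)$ (note $1$ is the terminal object of $\Set$, hence initial in $\Set\op$, but connectedness only guarantees good behavior on connected diagrams, so I must instead mimic Kock's argument: define $B$ via the action on a suitable diagram). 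Concretely, I expect to define $B := D(\emptyset)$? No --- rather, following the polynomial case dualized, one takes $E_b$ and $B$ by probing $D$ on the singleton and on the ``walking element'' maps. The cleanest route: show first that $D$ being connected-limit-preserving forces $D$ to be a coproduct of functors each preserving \emph{all} nonempty limits (i.e.\ each of the form $E_b^{(-)}$ by contravariant representability, since a nonempty-limit-preserving, hence in particular terminal-object-preserving, contravariant functor $\Set\op\to\Set$ sending the terminal set... ) --- invoking the contravariant Yoneda/representability criterion: a functor $\Set\op\to\Set$ preserving \emph{all} limits is representable, $\cong\Hom(-,E)=E^{(-)}$. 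The work is to split $D$ into its ``connected components'' indexed by $B=D(1)$, using that $1=\coprod$ over the empty set is the problematic case; one handles this by noting $D(X)=D(\coprod_{x\in X}1)$ only for $X$ nonempty, then extends by the natural map $D(X)\to D(1)^X$ and shows it exhibits $D(X)$ as $\sum_{b\in D(1)}(\text{fiber})^X$ where the fiber over $b$ is $E_b:=$ the set carved out by the component of $D$ at $b$. Making this splitting precise --- and verifying it is natural and recovers condition (1) --- is the crux, and I would lean on the exact dual of the argument sketched in Kock's 1.18 for the polynomial case, substituting ``connected colimit in $\Set$'' for ``connected limit in $\Set$'' throughout.
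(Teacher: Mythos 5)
Your implications $(1)\Leftrightarrow(2)\Rightarrow(3)\Rightarrow(4)$ are essentially sound, modulo one slip: $\Sigma_{!_B}\colon\Set_{/B}\to\Set$ does \emph{not} create all limits (the terminal object of $\Set_{/B}$ is $\id_B$, which it sends to $B$, not to $1$); it creates precisely the \emph{connected} ones, which is what you actually need, so the conclusion stands. The paper reaches (3)$\Rightarrow$(4) slightly differently, by identifying $\ext_\pi$ with the restricted representable $\Set^{\down}(!_{(-)},\pi)$ and using that $!_{(-)}$ preserves connected colimits (\cref{lem:bang.preserves.connected.colimits}), but your factor-by-factor argument through the slice is equally valid.

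The genuine gap is in $(4)\Rightarrow(1)$, and it is not just that you deferred to ``the dual of Kock's argument'': the scaffolding you set up for it is wrong. First, nonempty products are \emph{not} connected limits --- a discrete category with two or more objects is disconnected --- so preservation of connected limits does not give $D(\coprod_{i\in I}1)\cong\prod_{i\in I}D(1)$; indeed that isomorphism is false for $D(X)=\sum_{b}E_b^X$ as soon as $B$ has more than one element. The connected (co)limits you actually need are \emph{wide pushouts}: the key fact, which your sketch never identifies, is that every set $X$ is the colimit of the connected diagram $X^{\triangleleft}\to\down\xto{!_0}\Set$, i.e.\ the wide pushout under $0$ of $X$ copies of $1$ (connected because of the cone point). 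Applying $D$ turns this into the wide pullback over $D(0)$ of $X$ copies of $D(!_0)\colon D(1)\to D(0)$, which is exactly $\sum_{b\in D(0)}\bigl(D(1)_b\bigr)^X$. This also pins down the bundle: the base is $B=D(0)=D(\emptyset)$ and the total space is $E=D(1)$ --- note that you explicitly reject $D(\emptyset)$ as the base and instead index the sum by $D(1)$, which is the total space, not the base (compare $D(0)=\sum_{b\in B}E_b^0=B$). Without the wide-pushout presentation of $X$, and with the indexing set wrong, the ``splitting into components'' you describe cannot be carried out, so this direction remains unproved; the paper's proof of \cref{thm:dirichlet.set.characterization} supplies exactly the missing diagram.
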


Note that we no longer need to assume accessiblity. This is a general feature of
the theory of Dirichlet functors; it is a bit ``smaller'' and more manageable
than that of polynomials. In particular, a Dirichlet functor is determined by
its action on the terminal morphism $!_0 : 0 \to 1$ of the empty set. As a corollary, Dirichlet functors form a topos.

\begin{thm}\label{thm:dirichlet.set.equivalence}
The functor $\Set^{\down} \to \type{Fun}(\Set\op, \Set)$, given by sending $\pi : E
\to B$ to the induced Dirichlet functor $X \mapsto \sum_{b \in B} E_b^X$, is
fully faithful, and so gives an equivalence
$$\Set^{\down} \simeq \Dir$$
between the topos of bundles and the category of Dirichlet functors, with inverse given by evalutation at
$!_0 : 0 \to 1$.
\end{thm}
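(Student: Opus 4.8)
The plan is to analyze the functor $\mathbf{D}\colon \Set^{\down}\to\Fun(\Set\op,\Set)$ of the statement directly — prove it is fully faithful, observe that its essential image is $\Dir$ essentially by definition, and then identify $\ev_{!_0}$ as the inverse by computing $\mathbf{D}(\pi)$ on the objects $0$, $1$ and the arrow $!_0$. First I would record how $\mathbf{D}$ acts on morphisms: a covariant morphism of bundles $(f\colon B\to B',\ g\colon E\to E')$ restricts, over each $b\in B$, to a map of fibres $g_b\colon E_b\to E'_{f(b)}$, and $\mathbf{D}(f,g)$ is the natural transformation whose component at $X$ sends $(b,\phi\colon X\to E_b)$ to $(f(b),\,g_b\circ\phi)$.

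For full faithfulness I would rewrite $\mathbf{D}(\pi)(X)=\sum_{b\in B}E_b^{\,X}=\sum_{b\in B}\Hom_{\Set\op}(E_b,X)$, a coproduct of representable functors on $\Set\op$ (this is immediate from the definitions, a map $X\to E_b$ in $\Set$ being the same as a map $E_b\to X$ in $\Set\op$). Then, since a map out of a coproduct of presheaves is a product of maps out of the summands and the Yoneda lemma computes maps out of a representable,
\[
  \Hom\bigl(\mathbf{D}(\pi),\mathbf{D}(\pi')\bigr)\ \cong\ \prod_{b\in B}\mathbf{D}(\pi')(E_b)\ \cong\ \prod_{b\in B}\ \sum_{b'\in B'}\Hom_{\Set}(E_b,E'_{b'}).
\]
The right-hand side is precisely the set of covariant morphisms of bundles $\pi\to\pi'$: for each $b$ a choice of $b'=f(b)\in B'$ together with a fibrewise map $g_b\colon E_b\to E'_{b'}$. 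One checks this composite bijection is inverse to the action of $\mathbf{D}$ on hom-sets recorded above — both send a natural transformation $\alpha$ to the family $b\mapsto\alpha_{E_b}(b,\id_{E_b})$ — so $\mathbf{D}$ is full and faithful. I would pay attention here to the degenerate cases (empty base, empty fibre $E_b$ or $E'_{b'}$), but the product/coproduct formulas and $\Hom_{\Set}$ handle them correctly; e.g.\ $\Hom_{\Set}(\emptyset,E'_{b'})=1$ matches the fact that a covariant bundle map over an empty fibre is just the choice of its image in $B'$.

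To conclude, note that by definition — equivalently, by the characterization of Dirichlet functors recalled above — $\Dir$ is the full subcategory of $\Fun(\Set\op,\Set)$ on the functors isomorphic to some $X\mapsto\sum_b E_b^{\,X}$, which is exactly the essential image of $\mathbf{D}$; so $\mathbf{D}$ corestricts to an equivalence $\Set^{\down}\simeq\Dir$. Finally, for a bundle $\pi\colon E\to B$ one computes $\mathbf{D}(\pi)(0)=\sum_b E_b^{\,0}\cong B$ and $\mathbf{D}(\pi)(1)=\sum_b E_b^{\,1}\cong\sum_b E_b=E$, and under these identifications contravariant functoriality applied to $!_0\colon 0\to 1$ gives $\mathbf{D}(\pi)(!_0)=\pi$. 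Chasing the morphism formula shows these identifications are natural in $\pi$ (with $\mathbf{D}(f,g)_0=f$ and $\mathbf{D}(f,g)_1=g$), so $\ev_{!_0}\circ\mathbf{D}\cong\id_{\Set^{\down}}$; since a one-sided inverse up to isomorphism of an equivalence is automatically a quasi-inverse, the restriction of $\ev_{!_0}$ to $\Dir$ is the claimed inverse equivalence.

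The one step with genuine content is fullness. If one wishes to avoid the coproduct–Yoneda shortcut, it can be done by hand: given $\alpha\colon\mathbf{D}(\pi)\Rightarrow\mathbf{D}(\pi')$, set $f=\alpha_0$ and $g=\alpha_1$ (naturality at $!_0$ makes $(f,g)$ a bundle map), then use naturality of $\alpha$ at the maps $\phi\colon X\to E_b$ and at points $1\to E_b$ to force $\alpha_X(b,\phi)=(f(b),g_b\circ\phi)$, handling $X=\emptyset$ separately via $\alpha_0$. Either way this is the crux; the rest is unwinding definitions.
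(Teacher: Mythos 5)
Your full faithfulness argument is correct and takes a genuinely different route from the paper's. The paper obtains full faithfulness abstractly: it defines the extent functor $\ext$ as a right Kan extension along $!_0\op\circ\sigma:\down\to\Set\op$, right adjoint to $\ev_{!_0}$, and observes that since $!_0$ is fully faithful the counit of this adjunction is an isomorphism; only afterwards (\cref{prop:set.characterizing.extent}) does it identify $\ext_\pi$ with $X\mapsto\sum_b E_b^X$ and with the restricted representable $\Set^{\down}(!_{(-)},\pi)$. Your computation --- writing $\mathbf{D}(\pi)$ as a coproduct of representables on $\Set\op$ and expanding $\Hom(\mathbf{D}(\pi),\mathbf{D}(\pi'))\cong\prod_b\sum_{b'}\Hom_{\Set}(E_b,E'_{b'})$ --- is more elementary and makes the bijection with covariant bundle maps completely explicit. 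The identification of $\ev_{!_0}$ as the inverse also checks out.

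The gap is the sentence claiming that ``by definition'' $\Dir$ is the full subcategory on functors isomorphic to some $X\mapsto\sum_b E_b^X$, i.e.\ the essential image of $\mathbf{D}$. The paper does not define $\Dir$ that way: a Dirichlet functor is defined intrinsically as a functor $\Set\op\to\Set$ preserving connected limits, and identifying the essential image of $\mathbf{D}$ with that category is the substantive remaining half of the theorem --- it is exactly what \cref{thm:dirichlet.set.characterization} proves. Appealing to the characterization theorem stated in the introduction is circular, since that theorem is established by this very argument. One direction comes cheaply from your own setup: a coproduct of representables preserves connected limits, because representables preserve all limits and coproducts commute with connected limits in $\Set$. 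The other direction is where the real work lives: given $D$ preserving connected limits, one must show the canonical map $D\to\mathbf{D}(D!_0)$ is an isomorphism. The paper does this by exhibiting every set $X$ as the connected colimit (wide pushout) of the diagram $X^{\triangleleft}\to\down\xto{!_0}\Set$, i.e.\ $X$-many copies of $0\to1$ glued along $0$, so that $D(X)$ is computed as the corresponding wide pullback of $D(1)\to D(0)$, which is precisely $\sum_{b\in D(0)}\bigl((D!_0)^{-1}(b)\bigr)^X$. Without this step you have an equivalence of $\Set^{\down}$ with the category of sums of contravariant representables, but not with $\Dir$ as the paper defines it.
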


Now, object-wise, a Dirichlet functor and a polynomial functor are determined by
the same data --- a bundle $\pi : E \to B$ of sets. Accordingly, one would
expect for any set $N$ a transformation
$$X^N \mapsto N^X$$
turning polynomial functors into Dirichlet functors, and vice versa. But the
natural transformations between each sort of functor induce different morphisms
between the bundles; natural transformations between polynomial functors induce
contravariant bundle morphisms, while natural transformations between Dirichlet
functors induce covariant bundle morphisms. However, if we restrict to those
morphisms of bundles which are \emph{isovariant} on the fibers --- that is, the
pullback diagrams of the form
\[
  \begin{tikzcd}[column sep=large]
E \arrow[r, "\term{tot}(f_{\sharp})"] \arrow[d, "\pi"'] \arrow[dr, phantom,
"\ulcorner" very near start] & E' \arrow[d, "\pi'"] \\
B \arrow[r, "f"']                                       & B'                  
\end{tikzcd}
\]
which preserve the number of elements in each row --- we find that such a morphism
is both a co- and contravariant morphism of bundles. It is well known that such
\emph{cartesian} morphisms of bundles correspond to \emph{cartesian} natural
transformations between polynomial functors \cite[Theorem 3.8]{kock2012polynomial} --- those whose naturality
squares are pullbacks. This is true as well for Dirichlet functors.
\begin{thm}
A natural transformation $D \to D'$ of Dirichlet functors is Cartesian if and
only if the corresponding bundle map
\[
  \begin{tikzcd}[column sep=large]
E \arrow[r, "\term{tot}(f_{\sharp})"] \arrow[d, "\pi"'] \arrow[dr, phantom,
"\ulcorner" very near start] & E' \arrow[d, "\pi'"] \\
B \arrow[r, "f"']                                       & B'                  
\end{tikzcd}
\]
is a pullback. As a corollary, we have an equivalence of categories
$$\type{Poly}_{\ulcorner} \simeq \Dir_{\ulcorner}$$
between polynomial functors with cartesian natural transfromations and Dirichlet
functors with cartesian natural transfromations.
\end{thm}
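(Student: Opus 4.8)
The plan is to reduce the Cartesian condition to a single naturality square --- the one at the terminal morphism $!_0 : 0 \to 1$ of the empty set --- and then to verify that square by a direct computation in $\Set$.

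First I would record that evaluation at $!_0$ recovers the bundle. Since $E_b^1 \cong E_b$ and $E_b^0 \cong 1$, the Dirichlet functor $D(X) = \sum_{b \in B} E_b^X$ satisfies $D(1) \cong E$ and $D(0) \cong B$, and under these identifications $D(!_0) : D(1) \to D(0)$ is $\pi$ (this is the computation behind the last clause of \cref{thm:dirichlet.set.equivalence}). Likewise, the natural transformation $\alpha : D \to D'$ corresponding to a covariant bundle morphism acts by $\alpha_X(b, \phi) = (f(b),\, f_\sharp^b \circ \phi)$, where $f_\sharp^b : E_b \to E'_{f(b)}$ denotes the restriction of $\term{tot}(f_\sharp) : E \to E'$ to fibers; so under the identifications above, the component of $\alpha$ at $!_0$ is exactly the square displayed in the statement, with $\alpha_1 = \term{tot}(f_\sharp)$ and $\alpha_0 = f$. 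The ``only if'' direction is then immediate: if $\alpha$ is Cartesian, this square --- being one of the naturality squares of $\alpha$ --- is a pullback.

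For the converse I would work elementwise. Assume the displayed bundle square is a pullback; since a commuting square of bundles is a pullback exactly when the top map restricts to a bijection $E_b \to E'_{f(b)}$ on each fiber over $B$, this says precisely that every $f_\sharp^b$ is a bijection. Fix $g : X \to Y$. Writing $D(X) = \{(b, \phi) : b \in B,\ \phi : X \to E_b\}$, the naturality square of $\alpha$ at $g$ has vertical maps $D(g)(b, \chi) = (b, \chi \circ g)$ and $D'(g)(b', \psi) = (b', \psi \circ g)$, and its comparison map $D(Y) \to D(X) \times_{D'(X)} D'(Y)$ sends $(b, \chi) \mapsto \big((b, \chi \circ g),\, (f(b), f_\sharp^b \circ \chi)\big)$. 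An element $\big((b, \phi), (b', \psi)\big)$ of the pullback satisfies $f(b) = b'$ and $f_\sharp^b \circ \phi = \psi \circ g$, so I would send it back to $\big(b,\, (f_\sharp^b)^{-1} \circ \psi\big) \in D(Y)$ and check --- a routine computation using those two relations and the bijectivity of $f_\sharp^b$ --- that this is a two-sided inverse to the comparison map. Hence every naturality square of $\alpha$ is a pullback, i.e.\ $\alpha$ is Cartesian.

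For the corollary I would glue three inputs. By \cref{thm:dirichlet.set.equivalence} we have an equivalence $\Set^{\down} \simeq \Dir$; by the theorem just proved it carries the Cartesian natural transformations of Dirichlet functors precisely onto those square bundle morphisms that are pullbacks, which form a subcategory $\Set^{\down}_{\ulcorner}$ because pullback squares are closed under composition (pasting lemma). On the polynomial side, \cite[Theorem 3.8]{kock2012polynomial}, combined with the identification of Cartesian contravariant bundle morphisms with pullback squares recalled in the introduction, gives $\type{Poly}_{\ulcorner} \simeq \Set^{\down}_{\ulcorner}$. Both equivalences are the identity on underlying bundles, so composing yields $\type{Poly}_{\ulcorner} \simeq \Dir_{\ulcorner}$, sending the polynomial functor $X \mapsto \sum_b X^{E_b}$ to the Dirichlet functor $X \mapsto \sum_b E_b^X$ on the same bundle. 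I expect the only real friction to be in the converse: nailing down the exact formula for $\alpha_X$ produced by a covariant bundle morphism, and (implicitly) checking that the candidate inverse is natural in $X$, equivalently that the map I wrote down really is the canonical comparison map. None of this is deep --- the one genuine observation is that ``pullback square'' unwinds to ``fiberwise bijective,'' which is exactly what powers the lifting $(f_\sharp^b)^{-1}$.
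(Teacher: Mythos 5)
Your proof is correct, and its skeleton matches the paper's: both arguments identify the bundle map with the naturality square of $\alpha$ at $!_0 : 0 \to 1$, dispose of the ``only if'' direction by observing that this square is one instance of naturality, and then show that this single pullback square forces all the others. Where you diverge is in how that converse is verified. The paper stays at the level of universal properties: it invokes the natural isomorphism $D(X) \cong \Set^{\down}(!_X, D!_0)$ from \cref{prop:set.characterizing.extent} to rewrite the naturality square at an arbitrary $g : X \to X'$ as a square of hom-sets, and then argues via a cube diagram that the universal property of the pullback $D(1) \times_{D'(0)} D'(1)$ produces the required unique fillers. You instead unwind everything elementwise, using the observation --- absent from the paper --- that a commuting square of bundles is a pullback precisely when each fiber map $f_\sharp^b : E_b \to E'_{f(b)}$ is a bijection, and then exhibit an explicit two-sided inverse $(b,\phi,\psi) \mapsto (b, (f_\sharp^b)^{-1}\circ\psi)$ to the comparison map. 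Your route is more concrete and self-contained (the ``fiberwise bijective'' characterization does all the work, and the inverse is written down rather than conjured from a universal property), at the cost of being specific to $\Set$; the paper's argument is terser and would transport more readily to other settings where the representable description $\Set^{\down}(!_{(-)}, \pi)$ survives. For the corollary both you and the paper compose the restricted equivalence $\Dir_{\ulcorner} \simeq \Set^{\down}_{\ulcorner}$ with the known polynomial-side equivalence from \cite{kock2012polynomial}; you cite Theorem 3.8 where the paper cites Proposition 3.14, but the content is the same.
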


\begin{acknowledgements}
We appreciate helpful comments from Andr\'e Joyal and Joachim Kock on an early draft of this paper. Myers was supported by the National Science Foundation grant DMS-1652600, and Spivak was supported by AFOSR grants FA9550-19-1-0113 and FA9550-17-1-0058.
\end{acknowledgements}

\section{Dirichlet Functors} \label{sec:set.level}

Before diving in to the theory of Dirichlet functors, let's first consider the
category $\Set^{\down}$ of bundles of sets and covariant bundle maps. For our
proofs to go smoothly, we will need to explicitly keep track of the
self-dualizing isomorphism $\downarrow \xto{\sim} (\downarrow)\op$ on the walking arrow.

\begin{defn}
  We let $\down$ be the walking arrow --- the category $\term{dom} \to
  \term{cod}$ consisting of a single morphism. We denote by $\sigma : \down
  \to (\downarrow)\op$ the self-dualizing isomorphism of $\down$, and note
  that $\sigma\inv = \sigma\op$.
\end{defn}

\begin{prop}\label{prop:adjoint.sextuple}
  There is an adjoint sextuple:
  \[
    \begin{tikzcd}
      \Set \arrow[r, "\const"] \arrow[r, leftarrow, shift left = 5,
      "\term{cod}"] \arrow[r, leftarrow, shift right = 5, "\term{dom}"]
      \arrow[r, shift left = 10, "!^{(-)}"]  \arrow[r, shift right = 10,
      "!_{(-)}"] \arrow[r, leftarrow, shift left = 15, "\term{ZC}"]& \Set^{\down}
    \end{tikzcd}
  \]
  All three functors $\Set \to \Set^{\down}$ are fully faithful.
\end{prop}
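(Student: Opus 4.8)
The plan is to write the six functors down explicitly, obtain four of the five adjunctions for free from the theory of Kan extensions, verify the one remaining adjunction by a direct hom-set computation, and finish with a short check of full faithfulness. The projections $\term{dom},\term{cod}\colon\Set^{\down}\to\Set$ send a bundle $\pi\colon E\to B$ to $E$ and to $B$; the three functors into $\Set^{\down}$ are $\const\colon X\mapsto(\id_X\colon X\to X)$, $!^{(-)}\colon X\mapsto(0\to X)$ and $!_{(-)}\colon X\mapsto(X\to1)$, with $\term{ZC}\colon\Set^{\down}\to\Set$ the remaining, less obvious functor. Now observe that $\term{dom}$ and $\term{cod}$ are precisely restriction along the two functors $\pt_{\term{dom}},\pt_{\term{cod}}\colon\One\to\down$ that name the objects of the walking arrow. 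Each therefore has a left adjoint (left Kan extension) and a right adjoint (right Kan extension); since $\One$ is a point and each relevant comma category is either a single point or empty, these Kan extensions are immediate to compute, and one finds $\mathrm{Lan}_{\pt_{\term{dom}}}=\mathrm{Ran}_{\pt_{\term{cod}}}=\const$, $\mathrm{Lan}_{\pt_{\term{cod}}}=\,!^{(-)}$ and $\mathrm{Ran}_{\pt_{\term{dom}}}=\,!_{(-)}$. Gluing the two adjoint triples $!^{(-)}\dashv\term{cod}\dashv\const$ and $\const\dashv\term{dom}\dashv\,!_{(-)}$ along the common functor $\const$ yields the adjoint string $!^{(-)}\dashv\term{cod}\dashv\const\dashv\term{dom}\dashv\,!_{(-)}$, which is four of the five adjunctions.

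It then remains to produce $\term{ZC}$ together with the outermost adjunction $\term{ZC}\dashv\,!^{(-)}$. For this I would give $\term{ZC}$ explicitly and verify the defining natural bijection $\Hom_{\Set}(\term{ZC}\,\pi,X)\cong\Hom_{\Set^{\down}}(\pi,\,!^{(-)}X)$ by unwinding the commuting square that a morphism of bundles amounts to; equivalently, one exhibits the unit and counit and checks the two triangle identities. Throughout I would carry along the self-dualizing isomorphism $\sigma\colon\down\xto{\sim}(\down)\op$, so that the whole sextuple is visibly compatible with the duality of bundles that the later sections exploit.

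Finally, full faithfulness of $\const$, $!^{(-)}$ and $!_{(-)}$: each arises as a left or right Kan extension along the fully faithful functor $\One\hookrightarrow\down$, and Kan extension along a fully faithful functor is fully faithful; alternatively one checks directly that, for each of the three, a morphism of bundles between two objects in the image is determined by a single function between the indexing sets — the commuting square, together with the uniqueness of maps out of $0$ or into $1$, leaving no further choice. The only genuinely new computation here is the $\term{ZC}$-adjunction; everything else is bookkeeping, and the one point that demands real care is threading $\sigma$ through the construction so that the duality symmetry of the sextuple is manifest rather than merely true.
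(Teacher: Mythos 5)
Your handling of the inner five functors is correct and is essentially the paper's own argument in slightly different packaging: the paper obtains $\term{cod}$, $\const$, $\term{dom}$ by restricting along the adjoint triple between $\One$ and $\down$ and gets $!^{(-)}$, $!_{(-)}$ by Kan extension, which is the same computation you organize as $\mathrm{Lan}$ and $\mathrm{Ran}$ along the two point inclusions; your full-faithfulness argument (Kan extension along a fully faithful functor is fully faithful) is if anything cleaner than the paper's, which checks that $\term{dom}\,!_X\cong X$ and propagates full faithfulness along the adjoint string.

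The problem sits exactly where you locate ``the only genuinely new computation,'' and you do not carry it out: you never say what $\term{ZC}$ is, and the verification you promise would not go through as described. A morphism of bundles $\pi\to\,!^X$ is a commuting square whose top edge is a function $E\to 0$, so $\Hom_{\Set^{\down}}(\pi,\,!^X)$ is empty whenever the total space $E$ is nonempty; testing against $X=1$, such a functor cannot be of the form $\Hom_{\Set}(S,X)$ for any set $S$, so ``unwinding the commuting square'' does not yield the natural bijection $\Hom_{\Set}(\term{ZC}\,\pi,X)\cong\Hom_{\Set^{\down}}(\pi,\,!^{X})$ that you assert. This is the one step that demands genuine care --- not the threading of $\sigma$, which the paper's proof of this proposition does not use at all. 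The paper proceeds differently here: it claims $!^{(-)}$ preserves all limits, deduces the existence of a left adjoint, and identifies $\term{ZC}(\pi)$ concretely as the largest subset of $B$ over which $\pi$ pulls back to $0$ (the set of $b$ with empty fiber $E_b$). Note that on either route the crux is the behavior on the empty diagram: limits in $\Set^{\down}$ are computed pointwise, and what $!^{(-)}$ does to the terminal object is precisely the boundary case your plan, as written, never confronts. You need to either supply this verification honestly --- and discover what actually happens there --- or find a different justification for the outermost adjoint; deferring it is not an option, because it is the only part of the proposition that is not formal bookkeeping.
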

\begin{proof}
There is an adjoint triple $\begin{tikzcd} 1 \arrow[r,leftarrow] \arrow[r,
  bend left] \arrow[r, bend right] & \down \end{tikzcd}$. The middle three
functors---$\term{cod}$, $\const$, and $\term{dom}$---are given by restricting along this adjoint triple. The next two, $!_{(-)}$ and $!^{(-)}$, are
given by Kan extending, or more concretely:
  \begin{equation}\label{eqn.bangbang}
  X \mapsto \begin{tikzcd} 0 \arrow[d, "!^X"] \\ X \end{tikzcd}
  \qquad\text{and}\qquad
  X \mapsto \begin{tikzcd} X \arrow[d, "!_X"] \\ 1 \end{tikzcd}
  \end{equation}
 It is easy to see that the unit $X \to \term{dom} !_{X}$ is an isomorphism (it is in
fact an identity), so $!_{(-)}$ is fully faithful. Therefore, all three functors
going from $\Set \to \Set^{\down}$ are fully faithful.

The existence of the final adjoint can be deduced from the fact that $!^{(-)}$ preserves all
limits. It sends a bundle $\pi : E \to B$ to the largest subset $\term{ZC}(\pi)$ of $B$ for
which the following square is a pullback
\[
  \begin{tikzcd}[column sep=large]
0 \arrow[r] \arrow[d] \arrow[dr, phantom,
"\ulcorner" very near start] & E \arrow[d, "\pi"] \\
\term{ZC}(\pi) \arrow[r, hookrightarrow]                                       & B
\end{tikzcd}
\qedhere
\]
\end{proof}
We prove a few quick facts we will use later.
\begin{lem}\label{lem:bang.preserves.connected.colimits}
  The functor $!_{(-)}\colon\Set\to\Set^{\down}$ from \eqref{eqn.bangbang} preserves connected colimits.
\end{lem}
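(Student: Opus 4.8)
The plan is to use the fact that $\Set^{\down}$ is a functor category $\Fun(\down,\Set)$, so that colimits in it exist and are computed pointwise: the colimit of a diagram $i \mapsto (\pi_i : E_i \to B_i)$ is the bundle $\colim_i E_i \to \colim_i B_i$ with the evidently induced map, with legs given levelwise. Now let $i \mapsto X_i$ be a diagram in $\Set$ indexed by a connected category. Applying $!_{(-)}$ produces the diagram $i \mapsto (X_i \xrightarrow{!_{X_i}} 1)$ in $\Set^{\down}$, whose pointwise colimit therefore has domain $\colim_i X_i$ and codomain $\colim_i 1$, the colimit of the constant diagram at the terminal set.

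The one substantive input is that the colimit of the constant diagram at $1$ over a \emph{connected} index category is again $1$; concretely, $\colim_i 1$ is the set of connected components of the index category, hence a singleton. Since the whole point of the lemma is to keep track of connectedness, I would spell this out rather than cite it. Granting it, the colimit of $i \mapsto !_{X_i}$ is the bundle $\colim_i X_i \to 1$, which is exactly $!_{\colim_i X_i}$, and one checks that under this identification the colimit cocone is the image under $!_{(-)}$ of the colimit cocone of $i \mapsto X_i$: on domains it is the colimit cocone in $\Set$, and on codomains it is the unique cocone available, since every leg is a map into $1$. Hence $!_{(-)}$ sends the colimit cocone to a colimit cocone, as required.

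I do not expect a genuine obstacle here; the only thing to be careful about is not to overclaim — for a non-connected diagram $\colim_i 1$ is the (larger) set of components, and the statement fails — and to phrase the pointwise-colimit fact so that it is visibly applicable. An alternative packaging, if preferred, is to note that the functor $(\term{dom},\term{cod}) : \Set^{\down} \to \Set \times \Set$ creates colimits, that $\term{dom}\circ{!_{(-)}} = \id_{\Set}$ preserves all colimits, and that $\term{cod}\circ{!_{(-)}}$ is the constant functor at $1$, which preserves connected colimits; composing gives the claim.
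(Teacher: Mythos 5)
Your proposal is correct and follows essentially the same route as the paper: compute the colimit in $\Set^{\down}$ pointwise and observe that the colimit of the constant diagram at $1$ over a connected index category is the set of connected components, hence a singleton. The extra care you take with the cocone identification and the alternative packaging via $(\term{dom},\term{cod})$ are fine but not needed beyond what the paper records.
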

\begin{proof}
  To see that $!_{(-)}$ preserves connected colimits, recall that colimits in $\Set^{\down}$ are calculated pointwise. It remains to show, then, that the map of bundles
  \[
    \begin{tikzcd}
    \colim X_i \arrow[r, equals] \arrow[d] & \colim X_i \arrow[d] \\
    \colim 1 \arrow[r] & 1 
    \end{tikzcd}
  \]
  is an isomorphism in $\Set^{\down}$, for which it suffices to show that
  $\colim 1$ is terminal. But the colimit of a diagram of terminal objects is the set
  of connected components of its indexing category. Since we assumed the
  indexing category was connected, this contains a single element.
\end{proof}

\begin{lem}\label{lem:set.bundle.yoneda}
The Yoneda embedding $\yo : (\downarrow)\op \to \Set^{\downarrow}$ is equal to the
composite $$(\downarrow)\op \xto{\sigma\op} \,\downarrow\, \xto{!_{0}} \Set
\xto{!_{(-)}} \Set^{\downarrow}.$$

As a corollary, any $\pi : \downarrow \to \Set$ is naturally isomorphic to the composite
$$\downarrow \xto{\sigma} (\downarrow)\op \xto{!_0\op} \Set\op \xto{!_{(-)}\op}
(\Set^{\downarrow})\op \xto{\Set^{\downarrow}(-, \pi)} \Set$$
by the Yoneda lemma.
\end{lem}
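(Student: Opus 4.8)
The plan is to verify the stated equality of functors by direct computation. Since $(\downarrow)\op$ has only two objects and a single non-identity morphism, a functor out of it is completely determined by its values on $\dom$, on $\cod$, and on that one morphism, with no further coherence to check. So I would compute both $\yo$ and the composite $!_{(-)} \circ !_0 \circ \sigma\op$ on these three inputs and observe that they agree. Write $f \colon \dom \to \cod$ for the non-identity arrow of $\downarrow$ and $g \colon \cod \to \dom$ for the non-identity arrow of $(\downarrow)\op$.

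First I would unwind the composite. Being an isomorphism $\downarrow \xto{\sim} (\downarrow)\op$, $\sigma$ must carry the source object of $\downarrow$ to the source object of $(\downarrow)\op$, so $\sigma(\dom) = \cod$, $\sigma(\cod) = \dom$, and $\sigma(f) = g$; consequently $\sigma\op \colon (\downarrow)\op \to \downarrow$ also swaps $\dom$ and $\cod$ and carries $g$ back to $f$. The object $!_0$ of $\Set^{\downarrow}$, viewed as a functor $\downarrow \to \Set$, has $!_0(\dom) = 0$ and $!_0(\cod) = 1$. And $!_{(-)}$ sends a set $X$ to the bundle $(X \xrightarrow{!_X} 1)$ and a function $u$ to the bundle map that is $u$ on total spaces and $\id_1$ on bases. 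Composing, $\dom \mapsto \cod \mapsto 1 \mapsto (1 \xrightarrow{\id} 1)$, $\cod \mapsto \dom \mapsto 0 \mapsto (0 \to 1)$, and $g \mapsto f \mapsto (0 \to 1) \mapsto$ the bundle map from $(0 \to 1)$ to $(1 \xrightarrow{\id} 1)$ that is the unique function $0 \to 1$ on total spaces and $\id_1$ on bases.

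Next I would compute $\yo$ directly. The functor $\yo(\dom) = \downarrow(\dom, -)$ takes value $\downarrow(\dom,\dom) \cong 1$ at $\dom$ and $\downarrow(\dom,\cod) \cong 1$ at $\cod$, with the identity as structure map, i.e.\ it is the bundle $(1 \xrightarrow{\id} 1)$; $\yo(\cod) = \downarrow(\cod, -)$ takes value $\downarrow(\cod,\dom) = 0$ at $\dom$ and $\downarrow(\cod,\cod) \cong 1$ at $\cod$, i.e.\ it is $(0 \to 1)$; and $\yo(g)$ is precomposition with $f$, which is the unique function $0 \to 1$ on $\dom$-components and the identity on $\cod$-components. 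Under the evident identifications of the singleton hom-sets with $1$, this matches the composite computed above on the nose, which proves the first claim.

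For the corollary I would simply substitute the formula for $\yo$ into the Yoneda lemma. Yoneda supplies a natural isomorphism $\pi(c) \cong \Set^{\downarrow}(\yo(c), \pi)$ in $c \in \downarrow$, that is, a natural isomorphism $\pi \cong \Set^{\downarrow}(-, \pi) \circ \yo\op$ of functors $\downarrow \to \Set$. Taking opposites of the identity $\yo = !_{(-)} \circ !_0 \circ \sigma\op$ and using $(\sigma\op)\op = \sigma$ gives $\yo\op = !_{(-)}\op \circ !_0\op \circ \sigma$, and substituting yields precisely the stated composite. I do not anticipate a real obstacle; the only thing requiring care is the bookkeeping of variances — that it is $\sigma$ and not $\sigma\inv$ that appears, and that $\yo(\dom)$ lands on the identity bundle $1 \to 1$ via $!_{(-)}$ rather than on the bundle $0 \to 1$ — and it is exactly the explicit choice of self-duality $\sigma$ from the preceding definition that makes the first equality hold strictly rather than merely up to isomorphism.
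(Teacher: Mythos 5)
Your proof is correct and takes the same route as the paper, whose entire proof is ``One checks directly''; you have simply carried out that direct check on the two objects and the single non-identity morphism, with the variances and the role of $\sigma$ handled correctly. Nothing further is needed.
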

\begin{proof}
One checks directly.
\end{proof}

Now we will define the \emph{extent} of a bundle $\pi$ to be the Dirichlet functor $\ext_\pi: \Set\op\to\Set$ that it
corresponds to. This sends a bundle
$\pi : E \to B$ to the functor
$$\ext_\pi(X)\coloneqq \sum_{b \in B} E_b^X.$$
We will, however, give a more abstract definition of the extent, and then
calculate a number of presentations of it.

\begin{defn}\label{def.extent}
  Consider the functor $!_0\op \circ \sigma : \down \to \Set\op$ picking out the
  unique morphism $1\to 0$ in $\Set\op$. Sending a functor $\Set\op\to\Set$ to
  its precomposition with $!_0\op \circ\sigma$ gives an evaluation functor
  $$\Fun(\Set\op, \Set)\xto{\ev_{!_0}} \Set^{\down}.$$
  This functor admits a right adjoint by right Kan extension along $!_0\op \circ
  \sigma : \down \to \Set\op$; we define the
  \emph{Dirichlet extent} functor $\ext : \Set^{\down} \to \Fun(\Set\op,
  \Set)$ to be this right adjoint. It sends any bundle $\pi$ to
  $$\ext_{\pi} :\equiv \term{ran}_{!_0\op \circ \sigma} \pi.$$
\end{defn}

\begin{prop}\label{prop:set.characterizing.extent}
  Let $\pi : E \to B$ be a bundle. The following are equivalent:
  \begin{enumerate}
  \item The extent $\ext_{\pi} : \Set\op \to \Set$ of $\pi$ from \cref{def.extent}.
  \item The functor
    $$X \mapsto \sum_{b \in B} E_b^X,$$
    or equivalently the composite
  $$\Set\op \xto{(\Delta_B)\op} (\Set_{/B})\op \xto{\Set_{/B}(-, \pi)} \Set_{/B}
  \xto{\Sigma_B} \Set.$$
\item The pullback in $\Fun(\Set\op, \Set)$:  
\[
\begin{tikzcd}
	\ext_\pi \ar[r]\ar[d]\arrow[dr, phantom,
      "\ulcorner" very near start]&
	B\ar[d]\\
	\Set(-,E)\ar[r]&
	\Set(-,B)
\end{tikzcd}
\]
\item The restricted representable functor
  $\Set^{\down}(!_{(-)}, \pi)$.
\item The functor
  $$X \mapsto \lim(\Hom_{\Set}(!_0, X)\op \to \down\op \xto{\sigma\op} \down \xto{\pi} \Set)$$
  where $\Hom_{\Set}(!_0, X)$ is the comma category of $!_0 : \down \to
  \Set$ over $X:\ast\to\Set$.
\item The functor
  $$X \mapsto \lim(( X^{\triangleleft} )\op \xto{( !^{\triangleleft} )\op}
  (1^{\triangleleft})\op \xto{\sigma\op} \down
  \xto{\pi} \Set)$$
  where $(-)^{\triangleleft}$ is the \emph{left cone} 2-functor, adjoining an initial object.
  \end{enumerate}
\end{prop}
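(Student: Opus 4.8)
The plan is to take item~(2) as the concrete reference model and show that each of the other five descriptions is canonically isomorphic to it, using the pointwise formula for the right Kan extension in item~(1) as the bridge; throughout we use that limits --- and in particular pullbacks --- in $\Fun(\Set\op,\Set)$ are computed pointwise.

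The heart of the matter is the pointwise formula. Since $\ext_\pi = \term{ran}_{!_0\op\circ\sigma}\,\pi$ by \cref{def.extent}, for a set $X$ we have
$$\ext_\pi(X)\;\cong\;\lim\!\Big(\big(X\down(!_0\op\circ\sigma)\big)\xto{\term{pr}}\down\xto{\pi}\Set\Big),$$
the comma category having for objects the pairs $(d,\ X\to(!_0\op\circ\sigma)(d)$ in $\Set\op)$. Unwinding: such an object is either $\cod$ equipped with the unique morphism $X\to0$ of $\Set\op$ (giving a single object, say $\star$), or $\dom$ equipped with a morphism $X\to1$ of $\Set\op$, i.e.\ a function $1\to X$, i.e.\ an element $x\in X$. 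There is exactly one morphism from each $x$-object to $\star$ and no other non-identity morphisms, so this comma category is the discrete category on the set $X$ with a terminal object freely adjoined; chasing the self-duality, it is exactly $\Hom_\Set(!_0,X)\op$, and also $(X^{\triangleleft})\op$. A short check on objects and morphisms then shows that under these identifications $\term{pr}$ followed by $\pi$ is the functor named in item~(5), respectively item~(6) --- the single $\sigma\op$ in each formula accounting for the passage between $\down$ and $\down\op$ forced by forming the comma over $\Set\op$. This proves the equivalence of~(1), (5), and~(6).

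It remains to evaluate the limit and to dispatch~(3) and~(4). Under the description above the diagram sends the adjoined terminal object to $\pi(\cod)=B$, each $x$-object to $\pi(\dom)=E$, and every structural morphism to $\pi\colon E\to B$, so its limit is the wide pullback
$$\big\{(e_x)_{x\in X}\in E^X \ \big|\ \pi(e_x)\ \text{is independent of}\ x\big\}\;=\;\sum_{b\in B}E_b^X,$$
which is item~(2); rewriting $\sum_{b\in B}E_b^X$ as $\Sigma_B$ applied to the internal hom $\Set_{/B}(\Delta_B(-),\pi)$ --- whose fibre over $b$ is $E_b^X$ --- gives the composite presentation. Hence~(5), (6), and~(1) are all isomorphic to~(2). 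For~(3), the pullback of $B\to\Set(-,B)\leftarrow\Set(-,E)$ computed pointwise at $X$ --- the left map sending $b$ to the constant function $\const_b$, the right map being postcomposition with $\pi$ --- is $\{g\colon X\to E\mid\pi g\ \text{is constant}\}=\sum_{b\in B}E_b^X$. For~(4), a map $!_{(-)}X\to\pi$ in $\Set^{\down}$ --- a commuting square from $(X\to1)$ to $(E\to B)$ --- is precisely a pair $(b\in B,\ g\colon X\to E_b)$, so $\Set^{\down}(!_{(-)}X,\pi)\cong\sum_{b\in B}E_b^X$; alternatively, (1)$\,\cong\,$(4) follows at once from the adjunction $\ev_{!_0}\dashv\ext$ of \cref{def.extent} and the Yoneda lemma, once one observes that $\ev_{!_0}$ carries the representable $\Set(-,X)$ to $!_{(-)}X$.

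The only real obstacle is bookkeeping: keeping straight the self-dualizing isomorphism $\sigma$, the several opposite categories, and the chosen convention for comma categories, so that the functors literally written in~(5) and~(6) are recognized as the comma-category projection followed by $\pi$. The genuine content --- that the limit in question is the wide pullback $\sum_{b\in B}E_b^X$ --- is elementary once the indexing category has been identified, and naturality in $X$ is automatic from all of these constructions.
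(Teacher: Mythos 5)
Your proof is correct, and for the core step --- unwinding the right Kan extension $\ext_\pi = \term{ran}_{!_0\op\circ\sigma}\,\pi$ pointwise as a limit over a comma category and recognizing that category as $\Hom_\Set(!_0,X)\op \cong (X^\triangleleft)\op$ --- it matches the paper exactly; that is how the paper establishes (1)$\Leftrightarrow$(5)$\Leftrightarrow$(6). Where you diverge is in how the remaining items are attached. The paper continues the chain (6)$\Rightarrow$(4)$\Rightarrow$(3) and (4)$\Rightarrow$(2): it rewrites the representable $\Set^{\down}(!_X,\pi)$ as a limit by expressing $X$ as the connected colimit of $X^\triangleleft \to \down \xto{!_0}\Set$ and invoking \cref{lem:bang.preserves.connected.colimits} and \cref{lem:set.bundle.yoneda}, and it gets (3) from (4) by exhibiting the hom-functor of $\Set^{\down}$ as a pullback of hom-functors of $\Set$. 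You instead use (2) as the hub and verify (3), (4), and the limit in (5)/(6) against it by direct elementwise computation (the wide pullback $\{(e_x)\mid \pi(e_x)\ \text{constant}\} = \sum_b E_b^X$, and a square $!_X\to\pi$ being a pair $(b, g\colon X\to E_b)$). Your route is more elementary and self-contained --- it never needs the two lemmas --- while the paper's route is the one that generalizes (the colimit/connectedness argument is exactly what is reused in the proof of \cref{thm:dirichlet.set.characterization}, which is presumably why the authors factor it out). Your parenthetical alternative for (1)$\cong$(4), via the adjunction $\ev_{!_0}\dashv\ext$, the Yoneda lemma, and the observation that $\ev_{!_0}$ sends the representable at $X$ to $!_X$, is a clean shortcut that does not appear in the paper and is arguably slicker than either chain.
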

\begin{proof}
We have presented these results in order of most understandable to most
computational; we will prove it a somewhat opposite order.

First, we note that the conical limit formula for $\ext_{\pi} \equiv
\term{ran}_{!_0} \pi$ as a right Kan extension says
$$\ext_\pi(X)= \lim(\Hom_{\Set\op}(X, !_0\op \circ \sigma) \to \down \xto{\pi} \Set).$$
Now,
$\Hom_{\Set\op}(X, !_0\op \circ \sigma) \simeq \Hom_{\Set}(!_0 \circ \sigma\op,
X)\op$ over $\down$. Furthermore, we have the following equivalence:
\[
\begin{tikzcd}
{\Hom_{\Set}(!_0 \circ \sigma\op, X)\op} \arrow[d, "\sim"'] \arrow[r] & \down \arrow[d, "\sigma"] \\
{\Hom_{\Set}(!_0, X)\op} \arrow[r]                                    & \down\op                 
\end{tikzcd}
\]
and therefore we may equivalently calculate this limit as 
$$\lim(\Hom_{\Set}(!_0, X)\op \to \down\op \xto{\sigma\op} \down \xto{\pi} \Set).$$
This gives us the equivalence between (1) and (5).

The comma category $\Hom_{\Set}(!_0, X)$ simply adjoins an initial object
to (the discrete category) $X$. Therefore, we find that (5) and (6) are equivalent.

Every set $X$ is the colimit of the diagram $X^{\triangleleft}
\xto{1^{\triangleleft}}\down \xto{!_0} \Set$, namely:
\[
\begin{tikzcd}
              &              & X                                               &              &               \\
1 \arrow[rru] & 1 \arrow[ru] & \cdots                                          & 1 \arrow[lu] & 1 \arrow[llu] \\
              &              & 0 \arrow[llu] \arrow[lu] \arrow[ru] \arrow[rru] &              &              
\end{tikzcd}
\]
Since, by Lemma \ref{lem:bang.preserves.connected.colimits}, $!_{(-)}$
preserves connected colimits, we may make the following identification of (4)
with (6) using Lemma \ref{lem:set.bundle.yoneda}: 
\begin{align*}
  \Set^{\down}(!_X, \pi) &= \Set^{\down}(!_{\colim(X^{\triangleleft} \xto{!^{\triangleleft}} 1^{\triangleleft} \xto{!_0} \Set)}, \pi) \\
  &\simeq \Set^{\down}(\colim(X^{\triangleleft} \xto{!^{\triangleleft}} 1^{\triangleleft} \xto{!_0} \Set \xto{!_{(-)}} \Set^{\down}), \pi)\\
&\simeq \lim((X^{\triangleleft})\op \xto{(!^{\triangleleft})\op} (1^{\triangleleft})\op \xto{!_0\op} \Set\op \xto{!_{(-)}\op} (\Set^{\down})\op \xto{\Set^{\down}(-, \pi)} \Set) \\
&\simeq \lim((X^{\triangleleft})\op \xto{(!^{\triangleleft})\op} (1^{\triangleleft})\op \xto{\sigma\op} \down \xto{\pi} \Set).
\end{align*}

We see that (3) is equivalent to (4) by noting that the following square of
natural transformations is a pullback:
\[
\begin{tikzcd}
{\Set^{\downarrow}(-, -)} \arrow[d] \arrow[r] \arrow[rd, phantom, "\ulcorner" very near start] & {\Set(\cod(-), \cod(-))} \arrow[d] \\
{\Set(\dom(-), \dom(-))} \arrow[r]                       & {\Set(\dom(-), \cod(-))}          
\end{tikzcd}
\]
and restricting the right side to $\pi$ and the left side to $!_{(-)}$.

Finally, we note that the set $\Set^{\down}(!_X, \pi)$ is naturally
isomorphic to the set $\sum_{b \in B} E_b^X$, letting us identify (4) with (2).
\end{proof}

Now we are ready to intrinsically characterize the Dirichlet functors.
\begin{defn}
A \emph{Dirichlet functor} is a contravariant functor $D : \Set\op \to \Set$
which preserves connected limits. We denote by $\Dir$ the category of Dirichlet functors and natural transformations.
\end{defn}

\begin{thm}\label{thm:dirichlet.set.characterization}
  The functor $\ext$ is fully faithful, and gives an equivalence
  $$\Set^{\down} \simeq \Dir.$$

  As a corollary, the category of Dirichlet functors is a topos.
\end{thm}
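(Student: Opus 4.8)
The plan is to establish the equivalence $\Set^{\down} \simeq \Dir$ by showing that the adjunction $\ev_{!_0} \dashv \ext$ from \cref{def.extent} restricts to an equivalence on the relevant subcategories. First I would observe that, by the construction of $\ext$ as a right Kan extension, its essential image lands in the subcategory of connected-limit-preserving functors: indeed, \cref{prop:set.characterizing.extent}(5) exhibits $\ext_\pi(X)$ as a limit over the comma category $\Hom_{\Set}(!_0, X)$, and one checks that this construction, being a conical limit of the fixed diagram $\pi : \down \to \Set$ reindexed along a functor that sends connected limits of sets to the appropriate connected limits of comma categories, preserves connected limits in $X$. So $\ext$ factors through $\Dir \hookrightarrow \Fun(\Set\op, \Set)$.

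Next I would show $\ext : \Set^{\down} \to \Dir$ is fully faithful. By the triangle identities for the adjunction $\ev_{!_0} \dashv \ext$, it suffices to show that the counit $\ev_{!_0} \circ \ext \Rightarrow \id_{\Set^\down}$ is an isomorphism — equivalently, that $\ext_\pi$ evaluated at the arrow $!_0 : 0 \to 1$ recovers the bundle $\pi$. Using presentation (4), $\ext_\pi(X) = \Set^\down(!_X, \pi)$; evaluating the associated bundle at $!_0 : 0 \to 1$ means computing $\ext_\pi$ at the cone points of the diagram $1^\triangleleft \xto{!_0} \Set$, namely $\Set^\down(!_0, \pi)$ over $\Set^\down(!_1, \pi)$. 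Since $!_0 = \yo(\cod)$ and $!_1 = \yo(\dom)$ up to the self-dualizing $\sigma$ (this is exactly \cref{lem:set.bundle.yoneda}), the Yoneda lemma identifies this arrow of sets with $\pi(\cod) = E$ over $\pi(\dom) = B$, i.e.\ with $\pi$ itself. Hence the counit is an isomorphism and $\ext$ is fully faithful.

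For essential surjectivity onto $\Dir$, let $D : \Set\op \to \Set$ preserve connected limits. Set $\pi := \ev_{!_0}(D)$, the bundle $D(1) \to D(0)$. I claim the unit map $D \Rightarrow \ext_\pi$ is an isomorphism. Evaluating at any set $X$, write $X$ as the connected colimit $\colim(X^\triangleleft \xto{!^\triangleleft} 1^\triangleleft \xto{!_0} \Set)$ displayed in the excerpt; applying $D : \Set\op \to \Set$ turns this into a connected limit, and since $D$ preserves connected limits, $D(X) \simeq \lim((X^\triangleleft)\op \to (1^\triangleleft)\op \xto{\sigma\op} \down \xto{\pi} \Set)$, which is exactly presentation (6) of $\ext_\pi(X)$. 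Naturality of this identification in $X$ follows from naturality of the colimit presentation of $X$. Thus $D \cong \ext_\pi$, and $\ext$ is an equivalence with inverse $\ev_{!_0}$.

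The main obstacle I anticipate is the first step: verifying carefully that $\ext_\pi$ genuinely preserves connected limits, not merely that it is built from a limit formula. One must check that the assignment $X \mapsto \Hom_{\Set}(!_0, X)$ (or the left-cone $X^\triangleleft$) interacts correctly with connected limits of sets — the key point being that a connected limit of sets, when you adjoin a cone point and take opposites, yields a colimit of comma/cone categories over which the limit of $\pi$ computes correctly. Equivalently, one leverages that $!_{(-)} : \Set \to \Set^\down$ preserves connected colimits (\cref{lem:bang.preserves.connected.colimits}) together with the fact that the representable $\Set^\down(-, \pi)$ sends colimits to limits; presentation (4) then makes the connected-limit-preservation of $\ext_\pi$ immediate. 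Once that is in hand, the corollary that $\Dir$ is a topos is automatic: $\Set^\down = \Fun(\down, \Set)$ is a presheaf topos, and $\Dir$ is equivalent to it.
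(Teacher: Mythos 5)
Your proposal is correct and follows essentially the same route as the paper: full faithfulness via the counit of the $\ev_{!_0}\dashv\ext$ adjunction, landing in $\Dir$ via presentation (4) together with \cref{lem:bang.preserves.connected.colimits}, and essential surjectivity by writing $X$ as the connected colimit of $X^{\triangleleft}\to\down\xto{!_0}\Set$ so that the unit $D\to\ext_{D!_0}$ is an isomorphism. The only cosmetic difference is that you verify the counit isomorphism by a direct Yoneda computation (with the roles of $\pi(\dom)=E$ and $\pi(\cod)=B$ accidentally swapped in your bookkeeping, though this does not affect the argument), whereas the paper simply invokes that right Kan extension along the fully faithful $!_0\op\circ\sigma$ has invertible counit.
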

\begin{proof}
Since $!_0 : \down \to \Set\op$ is fully faithful, the counit of the
$\ev_{!_0} \vdash \ext$ adjunction, the universal cell defining the
right Kan extension $\ext_-$, is an isomorphism. Thus $\ext$ is fully faithful. In what follows we show that a functor is Dirichlet---preserves connected limits---if and only if it is the extent of a bundle, proving the equivalence of $\Dir$ with $\Set^{\down}$.

If $D$ is
the extent of a bundle $\pi$, then by Prop \ref{prop:set.characterizing.extent}, $D$ is naturally
isomorphic to the restricted representable $\Set^{\down}(!_{(-)},\pi)$. By Lemma
\ref{lem:bang.preserves.connected.colimits}, this sends connected colimits in $\Set$ to
connected limits.

Now, we show that if $D$ is Dirichlet, then the unit
$D \to \ext_{D!_0}$ is an isomorphism. By Prop
\ref{prop:set.characterizing.extent}, 
  $$\ext_{D!_0}(X) = \lim(X^{\triangleleft} \xto{!^{\triangleleft}} 1^{\triangleleft}
  \xto{D!_0} \Set).$$
  Every set $X$ is the connected colimit of the diagram $X^{\triangleleft} \to
\down \xto{!_0} \Set$, and therefore if $D$ preserves this limit, then
$D(X)$ is precisely the above limit $\ext_{D!_0}(X)$.
\end{proof}

\begin{rmk}
 Since polynomial functors preserve connected limits, the composite $P \circ D$
 of a polynomial functor after a Dirichlet functor is Dirichlet. On the other
 hand, the composite $D' \circ D\op$ of two Dirichlet functors does not in
 general preserve connected limits, since $D\op$ sends connected colimits in
 $\Set$ to connected colimits in $\Set\op$, and $D'$ does not necessarily
 preserve these. Furthermore, composites of Dirichlet functors are not in
 general accessible.   
\end{rmk}

\begin{rmk}
The six adjoints of \cref{prop:adjoint.sextuple} correspond, under
the equivalence of \cref{thm:dirichlet.set.characterization}, to
\begin{align*}
  \term{ZC}(D!_0) &\cong \mbox{the coefficient of $0^X$ in $D$.}\\
  \ext(!^{C}) &\cong X \mapsto C \times 0^X \\
  \cod(D!_0) &\cong D(0) \\
  \ext( \const(C) ) &\cong X \mapsto C \\
  \dom(D!_0) &\cong D(1) \\
  \ext( !_{C} ) &\cong X \mapsto C^X 
\end{align*}
 In particular, $!_{(-)}$ corresponds to the Yoneda embedding.
\end{rmk}

\section{Cartesian Transformations between Dirichlet Functors}
In this section, we turn to cartesian transformations between Dirichlet
functors. We will show that the category of Dirichlet functors and cartesian
transfromations is equivalent to the category of polynomial functors and
cartesian transformations.

\begin{prop}\label{prop:set.dirichlet.cartesian.iff.bundle.cartesian}
A natural transformation $\phi : D \to D'$ between Dirichlet functors is
cartesian if and only if the induced bundle map $D!_0 \to D'!_0$ is a pullback.

As a corollary, the equivalence $\Dir \simeq \Set^{\down}$ restricts to an
equivalence
$$\Dir_{\ulcorner} \simeq \Set^{\down}_{\ulcorner}$$
between Dirichlet functors with cartesian natural transformations and bundles with
pullback squares.
\end{prop}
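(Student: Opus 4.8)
The plan is to reduce the statement to an elementary diagram chase using the dictionary already established. By \Cref{thm:dirichlet.set.characterization} every Dirichlet functor is of the form $\ext_\pi$ for a bundle $\pi : E \to B$, and $\ext_\pi !_0$ recovers $\pi$ (this is the statement that evaluation at $!_0$ is inverse to $\ext$); moreover by \Cref{prop:set.characterizing.extent}(2) we may compute $\ext_\pi(X) = \sum_{b\in B} E_b^X$. So it suffices to understand, for a covariant bundle map $(f, f_\sharp)$ from $\pi : E\to B$ to $\pi' : E'\to B'$, when the induced natural transformation $\phi : \ext_\pi \to \ext_{\pi'}$ has all its naturality squares
\[
\begin{tikzcd}
\ext_\pi(Y) \ar[r,"\phi_Y"]\ar[d] & \ext_{\pi'}(Y)\ar[d] \\
\ext_\pi(X) \ar[r,"\phi_X"] & \ext_{\pi'}(X)
\end{tikzcd}
\]
cartesian, for every function $g : X \to Y$.

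First I would write the transformation explicitly on points: an element of $\ext_\pi(X)$ is a pair $(b, e)$ with $b\in B$ and $e : X \to E_b$, and $\phi_X(b,e) = (f(b),\, \term{tot}(f_\sharp)\circ e)$ using that $f_\sharp$ carries $E_b$ into $E'_{f(b)}$. The naturality square for $g : X\to Y$ is then, in coordinates, the comparison between $\sum_{b\in B} E_b^Y \to \sum_{b\in B}E_b^X$ and $\sum_{b'\in B'}{E'_{b'}}^Y \to \sum_{b'\in B'}{E'_{b'}}^X$. Since coproducts of sets are disjoint and pullbacks distribute over them, checking the square is a pullback decomposes over $b\in B$: it reduces to checking, for each $b$, that
\[
\begin{tikzcd}
E_b^Y \ar[r,"(f_\sharp)^Y"]\ar[d,"{(-)\circ g}"'] & {E'_{f(b)}}^Y \ar[d,"{(-)\circ g}"] \\
E_b^X \ar[r,"(f_\sharp)^X"'] & {E'_{f(b)}}^X
\end{tikzcd}
\]
is a pullback, where $f_\sharp : E_b \to E'_{f(b)}$ is the fiber map. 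Now I claim this square is a pullback \emph{for all} $g$ if and only if $f_\sharp : E_b \to E'_{f(b)}$ is a bijection: an element of the pullback is a pair $(u : X\to E'_{f(b)},\ v : Y\to E_b)$ with $f_\sharp\circ v = u\circ g$, and chasing this with $g = !_0 : 0 \to 1$ (so $Y = 1$, $X = 0$) shows the pullback condition forces $f_\sharp$ to be injective and surjective; conversely if $f_\sharp$ is a bijection the square is trivially a pullback. So $\phi$ is cartesian iff every fiber map $E_b \to E'_{f(b)}$ is a bijection, which is exactly the condition that the square
\[
\begin{tikzcd}
E \ar[r,"\term{tot}(f_\sharp)"]\ar[d,"\pi"'] & E'\ar[d,"\pi'"] \\
B \ar[r,"f"'] & B'
\end{tikzcd}
\]
is a pullback. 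Finally, since $D!_0 = \ext_\pi !_0 = \pi$, "the induced bundle map $D!_0 \to D'!_0$ is a pullback" is literally this last condition, which gives the proposition. The corollary about $\Dir_\ulcorner \simeq \Set^\down_\ulcorner$ is then immediate: the equivalence $\ext$ of \Cref{thm:dirichlet.set.characterization} identifies objects on both sides, and we have just shown it identifies the cartesian morphisms.

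The only genuinely delicate point is the reduction "pullback of the big square $\iff$ fiberwise bijection", and within that, making sure the decomposition of the naturality square over the coproduct $\sum_{b\in B}$ is legitimate — i.e. that the left-hand map $\ext_\pi(g)$ respects the $B$-indexing on the nose (it does: it does not touch the $B$-coordinate), so that the comparison map fits into a coproduct of squares indexed by $B$, with the $B'$-indexing only entering through $f$. One should be slightly careful to phrase this as: the square over $\ext_\pi \to \ext_{\pi'}$ is, after reindexing the target coproduct along $f$ and observing that the remaining summands (those $b'$ not in the image of $f$, or the "extra room" in $E'_{f(b)}$ beyond $E_b$) contribute nothing to the fiber of $\phi_X$, a $B$-indexed coproduct of the per-fiber squares above; this is exactly where disjointness of coproducts in $\Set$ is used. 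Everything else is formal.
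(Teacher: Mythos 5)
Your argument is correct, but it takes a genuinely different route from the paper. The paper never unpacks the coproduct: it works entirely with the representable description $D(X)\cong\Set^{\down}(!_X,D!_0)$ from \cref{prop:set.characterizing.extent}, observes that the only-if direction is the special case $g=!_0$ of the naturality square, and for the converse assembles the bundle square and the naturality square into a commutative cube, where the universal property of the pullback $D(1)\cong D'(1)\times_{D'(0)}D(0)$ directly produces the unique filler exhibiting the naturality square as a pullback. You instead compute with the explicit formula $\ext_\pi(X)=\sum_{b\in B}E_b^X$, decompose the naturality square over the coproduct (correctly flagging that this uses disjointness of coproducts in $\Set$ and the fact that $\ext_\pi(g)$ preserves the $B$-coordinate), and reduce to the statement that each per-fiber square is a pullback for all $g$ iff each $f_\sharp\colon E_b\to E'_{f(b)}$ is a bijection, tested at $g=!_0$. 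Both proofs are complete; yours is more elementary and makes the fiberwise content of cartesianness explicit, while the paper's avoids the coproduct bookkeeping entirely and is the kind of argument that transports to settings where one would not want to reason element-by-element. One small slip: in your description of an element of the per-fiber pullback the types of $u$ and $v$ are swapped --- it should be $u\colon X\to E_b$ and $v\colon Y\to E'_{f(b)}$ with $f_\sharp\circ u=v\circ g$ --- but this does not affect the argument.
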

\begin{proof}
We want to show that for any $f\colon D\to D'$, the square
\begin{equation}\label{eqn.cart1}
\begin{tikzcd}
	D(1)\ar[r, "f_1"]\ar[d, "\pi"']&
	D'(1)\ar[d, "\pi'"]\\
	D(0)\ar[r, "f_0"']&
	D'(0)
\end{tikzcd}
\end{equation}
is a pullback in $\Set$ iff for all functions $g\colon X\to X'$, the naturality square
\begin{equation}\label{eqn.cart2}
\begin{tikzcd}
  D(X')\ar[r, "f_{X'}"]\ar[d, "D(g)"']&
  D'(X')\ar[d, "D'(g)"]\\
  D(X)\ar[r, "f_X"']&
  D'(X)
\end{tikzcd}
\end{equation}
is a pullback in $\Set$. We will freely use the natural isomorphism
$D(X)\cong\Set^{\down}(!_X, D!_0)$ from
\cref{prop:set.characterizing.extent}, which allows us to identify Diagram
\eqref{eqn.cart2} with
\begin{equation}\label{eqn.cart3}
\begin{tikzcd}
 \Set^{\down}(!_{X'}, D!_0)     \ar[r, "f_{!_0,1}"]\ar[d, "!_g^{1}"']& \Set^{\down}(!_{X'}, D'!_0)
  \ar[d, "!_{g}^{1}"]\\
  \Set^{\down}(!_{X}, D!_0)\ar[r, "f_{!_0, 1}"']&
 \Set^{\down}(!_X, D'!_0) 
\end{tikzcd}
\end{equation}

The square in Diagram \eqref{eqn.cart1} is a
special case of that in Diagram \eqref{eqn.cart2}, namely for $g\coloneqq !_0$;
this establishes the only-if direction. 

To complete the proof, suppose that Diagram \eqref{eqn.cart1} is a pullback, take an arbitrary $g\colon X\to X'$, and suppose given a commutative solid-arrow diagram as shown:
\[
\begin{tikzcd}[sep=small]
  X\ar[rr, "g"]\ar[dd]\ar[rd]&&
  X'\ar[dr]\ar[dd]\ar[dl, dotted]\\&
  D(1)\ar[rr, crossing over]&&
  D'(1)\ar[dd]\\
  1\ar[dr]\ar[rr, equal]&&
  1\ar[dr]\ar[dl, dotted]\\&
  D(0)\ar[from=uu, crossing over]\ar[rr]&&
  D'(0)
\end{tikzcd}
\]
We can interpret the statement that Diagram \eqref{eqn.cart3} is a pullback as
saying that there are unique dotted arrows making the diagram commute. So, we need to show that if the front face is a pullback, then there are unique diagonal dotted arrows as shown, making the diagram commute. This follows quickly from the universal property of the pullback.
\end{proof}

\begin{thm}\label{thm:set.poly.cart.equiv.dir.cart}
  There is an equivalence of categories
  $$\type{Poly}_{\ulcorner} \simeq \Dir_{\ulcorner}$$
  between the category of polynomial functors and cartesian transformations and
  Dirichlet functors and cartesian transformations. This equivlalence sends
  representables to representables
  $$(-)^N \mapsto N^{(-)}.$$
\end{thm}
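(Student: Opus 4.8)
The plan is to exhibit both $\type{Poly}_{\ulcorner}$ and $\Dir_{\ulcorner}$ as equivalent to one and the same intermediate category --- the wide subcategory $\Set^{\down}_{\ulcorner}$ of $\Set^{\down}$ whose morphisms are exactly the pullback squares of bundles --- and then compose the two equivalences. First I would check that $\Set^{\down}_{\ulcorner}$ is genuinely a category: identity squares are pullbacks, and a horizontal composite of two pullback squares is again a pullback square by the pasting lemma, so the pullback squares do form a wide, identity-on-objects subcategory of $\Set^{\down}$.

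The Dirichlet half is essentially already in hand. By \cref{thm:dirichlet.set.characterization} the extent functor $\ext\colon\Set^{\down}\to\Dir$ is an equivalence, and by \cref{prop:set.dirichlet.cartesian.iff.bundle.cartesian} a natural transformation $D\to D'$ is cartesian precisely when the induced bundle map $D!_0\to D'!_0$ is a pullback. Since a fully faithful, essentially surjective functor remains fully faithful and essentially surjective after restricting the morphisms on source and target to matching classes, $\ext$ restricts to an equivalence $\Set^{\down}_{\ulcorner}\simeq\Dir_{\ulcorner}$.

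For the polynomial half I would invoke Kock: Theorem~2.17 of \cite{kock2012polynomial} identifies $\type{Poly}$ with the category of bundles and contravariant bundle morphisms, and \cite[Theorem~3.8]{kock2012polynomial} says that under this identification cartesian natural transformations correspond to the \emph{cartesian} contravariant morphisms --- those whose defining span $E\xleftarrow{\ \sim\ }f^{*}E'\to E'$ has its backward leg an isomorphism. Such a span is, canonically and compatibly with composition, the same datum as a pullback square $E\to E'$ over $f\colon B\to B'$: compose out the isomorphism to pass one way, and take the backward leg to be the identity $E=f^{*}E'$ to pass back. Hence $\type{Poly}_{\ulcorner}$ is equivalent to the wide subcategory of cartesian contravariant bundle morphisms, which is literally $\Set^{\down}_{\ulcorner}$, and composing gives $\type{Poly}_{\ulcorner}\simeq\Set^{\down}_{\ulcorner}\simeq\Dir_{\ulcorner}$. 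To see that the composite sends $(-)^{N}$ to $N^{(-)}$, note that the representable polynomial functor $(-)^{N}$ is presented by \eqref{eqn.poly} with base $1$ and fiber $N$, so it corresponds to the bundle $!_{N}\colon N\to 1$, and by the remark following \cref{thm:dirichlet.set.characterization} (equivalently, directly from \cref{prop:set.characterizing.extent}) the extent of $!_{N}\colon N\to 1$ is $X\mapsto N^{X}$, i.e.\ the representable Dirichlet functor $N^{(-)}$.

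The hard part will be the bookkeeping in the third paragraph: verifying carefully that ``cartesian contravariant morphism of bundles'' and ``pullback square of bundles'' name the very same wide subcategory of $\Set^{\down}$ --- including that the back-and-forth respects composition --- and that Kock's equivalence and the $\ext$-equivalence assign the same underlying bundle to each object, so that the composite is on the nose the transformation $(-)^{N}\mapsto N^{(-)}$ advertised in the statement. This is all elementary, but it requires matching the conventions of \cite{kock2012polynomial} with those of the present paper.
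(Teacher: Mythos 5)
Your proposal is correct and follows essentially the same route as the paper: both sides are identified with the category $\Set^{\down}_{\ulcorner}$ of bundles and pullback squares, using \cref{prop:set.dirichlet.cartesian.iff.bundle.cartesian} for the Dirichlet half and Kock's identification of cartesian morphisms of polynomials with cartesian bundle maps for the polynomial half (the paper cites \cite[Proposition 3.14]{kock2012polynomial} directly rather than assembling it from Theorems 2.17 and 3.8). Your explicit verification that representables go to representables is a welcome addition the paper leaves implicit.
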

\begin{proof}
This follows by composing the equivalence of $\Dir_{\ulcorner}$ with
$\Set^{\down}$ from
\cref{prop:set.dirichlet.cartesian.iff.bundle.cartesian} with that of
\cite[Proposition 3.14]{kock2012polynomial}, noting that
$\Set^{\down}_{\ulcorner}$ is the category of $(1,1)$-polynomials. 
\end{proof}

\begin{cor}
  Let $D$ be a Dirichlet functor. Then the category
  $\Dir_{\ulcorner/D}$
  of Dirichlet functors with a cartesian map to $D$ is a topos.
\end{cor}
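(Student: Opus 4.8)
The plan is to reduce the statement to the equivalence $\Dir \simeq \Set^{\down}$ of \cref{thm:dirichlet.set.characterization} together with the fact that any slice of a topos is again a topos. Write $\pi := D!_0 : E \to B$ for the bundle corresponding to $D$ under this equivalence, so that $B = \cod(\pi) = D(0)$. By \cref{prop:set.dirichlet.cartesian.iff.bundle.cartesian}, the equivalence $\Dir \simeq \Set^{\down}$ carries cartesian natural transformations to pullback squares of bundles, hence restricts to an equivalence $\Dir_{\ulcorner} \simeq \Set^{\down}_{\ulcorner}$, and this in turn restricts to an equivalence of slices $\Dir_{\ulcorner/D} \simeq (\Set^{\down}_{\ulcorner})_{/\pi}$. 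So it will suffice to identify $(\Set^{\down}_{\ulcorner})_{/\pi}$ with $\Set_{/B}$.

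To do this, I would use that the codomain functor $\cod : \Set^{\down} \to \Set$ is a Grothendieck fibration --- $\Set$ has pullbacks --- whose cartesian morphisms are exactly the pullback squares of bundles. Concretely: an object of $(\Set^{\down}_{\ulcorner})_{/\pi}$ is a pullback square whose right-hand column is $\pi$, and such a square exhibits its upper-left corner as the fibre product $B' \times_B E$; it is therefore determined, up to canonical isomorphism, by the single map $f : B' \to B$ along its bottom, i.e.\ by an object of $\Set_{/B}$. A morphism in $(\Set^{\down}_{\ulcorner})_{/\pi}$ between two such squares is, by the pullback pasting lemma, automatically cartesian, hence determined by the induced map of bases over $B$; and conversely every map over $B$ lifts uniquely. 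This yields an equivalence $(\Set^{\down}_{\ulcorner})_{/\pi} \simeq \Set_{/B} = \Set_{/D(0)}$, sending $f : B' \to B$ to the pullback of $\pi$ along $f$ together with its canonical cartesian map to $\pi$. Since $\Set_{/D(0)}$ is a slice of the topos $\Set$, it is a topos, and transporting along the equivalence shows $\Dir_{\ulcorner/D}$ is a topos. (Combining instead with \cref{thm:set.poly.cart.equiv.dir.cart}, the same argument shows $\type{Poly}_{\ulcorner/P}$ is a topos for any polynomial functor $P$.)

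The one place where there is genuinely something to check --- and so the step I would be most careful about --- is the collapse $(\Set^{\down}_{\ulcorner})_{/\pi} \simeq \Set_{/D(0)}$: the point is that demanding the structure map to $\pi$ be \emph{cartesian} rigidifies the domain bundle so that it carries no information beyond a single function into $B = D(0)$, so the apparently large slice $\Dir_{\ulcorner/D}$ is in fact no larger than $\Set_{/D(0)}$. The pasting lemma is what guarantees the morphisms match up as well, and everything else is formal.
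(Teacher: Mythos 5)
Your argument is correct, but it takes a genuinely different route from the paper. The paper's proof is a two-line reduction: it invokes \cref{thm:set.poly.cart.equiv.dir.cart} to identify $\Dir_{\ulcorner/D}$ with $\type{Poly}_{\ulcorner/P}$ for the corresponding polynomial $P$, and then cites \cite[Remark 2.6.2]{GHK:Analytic.Monads} for the fact that the latter is a topos. You instead stay entirely inside the bundle picture: you use \cref{prop:set.dirichlet.cartesian.iff.bundle.cartesian} to pass to $(\Set^{\down}_{\ulcorner})_{/\pi}$ and then prove directly that demanding cartesianness collapses this slice to $\Set_{/D(0)}$, with the universal property of the pullback giving essential surjectivity and full faithfulness and the pasting lemma guaranteeing that lifted morphisms are again cartesian. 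This is essentially the content of the external result the paper cites, so your proof is self-contained where the paper's is not, and it has the added benefit of identifying the topos explicitly as $\Set_{/D(0)}$ (and, as you note, of reproving the polynomial-side statement via \cref{thm:set.poly.cart.equiv.dir.cart}). One small imprecision: morphisms in $(\Set^{\down}_{\ulcorner})_{/\pi}$ are cartesian \emph{by definition}, being morphisms of $\Set^{\down}_{\ulcorner}$; what the pasting lemma actually buys you is that the unique lift of a base map $B''\to B'$ over $B$ to a map of the pulled-back bundles is automatically a pullback square (and, relatedly, that it makes no difference whether one slices $\Set^{\down}_{\ulcorner}$ or slices $\Set^{\down}$ and restricts to cartesian objects). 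This does not affect the correctness of the argument.
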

\begin{proof}
By Theorem \ref{thm:set.poly.cart.equiv.dir.cart}, this category is equivalent
to $\type{Poly}_{\ulcorner/P}$ for a polynomial $P$. But this is a topos as
observed in \cite[Remark 2.6.2]{GHK:Analytic.Monads}.
\end{proof}

Now, since $\Dir$ is a topos, so is $\Dir_{/D}$. And, as we saw above,
$\Dir_{\ulcorner/D}$ is a topos as well. What is the relationship between
$\Dir_{/D}$ and $\Dir_{\ulcorner/D}$?

We will show that $\Dir_{/_{\ulcorner} D}$ is a subtopos of $\Dir_{/D}$ with the
left exact left adjoint to the inclusion given by the vertical / cartesian factorization system on $\Set^{\down}$.

\begin{thm}
  For any $\pi : \down \to \Set$, we have a subtopos inclusion
  $$\Set^{\down}_{/_{\ulcorner} \pi} \hookrightarrow \Set^{\down}_{/\pi}$$

  with left exact left adjoint given by the vertical / cartesian factorization system:
  \[
\begin{tikzcd}
E' \arrow[r] \arrow[d] & E \arrow[d, "\pi"] \\
B' \arrow[r]           & B                 
\end{tikzcd} \quad\quad\mapsto\quad\quad
\begin{tikzcd}
E' \arrow[d] \arrow[r, dashed]    & \bullet \arrow[d] \arrow[r] \arrow[rd, phantom, "\ulcorner" very near start] & E \arrow[d, "\pi"] \\
B' \arrow[r, equals] & B' \arrow[r]                           & B                 
\end{tikzcd}
  \]
  As a corollary, $\Dir_{\ulcorner/D} \hookrightarrow \Dir_{D}$ is a
  subtopos inclusion.
\end{thm}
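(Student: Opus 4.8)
The plan is to recognize the displayed operation as the reflector of a reflective subcategory coming from an orthogonal factorization system on $\Set^{\down}$, and then to identify that reflector, up to equivalence, with a slice of the codomain functor $\cod\colon\Set^{\down}\to\Set$ --- which, being an evaluation functor on a functor category (equivalently, one of the adjoints of \cref{prop:adjoint.sextuple}), preserves all limits. First I would record that $\Set^{\down}$ carries the orthogonal factorization system whose left class is the \emph{vertical} maps of bundles (identity on the base) and whose right class is the \emph{cartesian} maps (underlying square a pullback): every bundle map factors as in the diagram displayed in the statement, by forming the pullback $\bullet := B'\times_{B}E$, and orthogonality of a vertical map against a cartesian map is a direct chase using only the universal property of pullbacks. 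By pasting of pullback squares, any morphism over $\pi$ between two cartesian maps to $\pi$ is itself cartesian; hence $\Set^{\down}_{/_{\ulcorner}\pi}$ is a \emph{full} subcategory of $\Set^{\down}_{/\pi}$, so the inclusion is fully faithful.

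Next I would produce the left adjoint $L$. Given an object $h\colon\rho'\to\pi$ of $\Set^{\down}_{/\pi}$, its vertical/cartesian factorization $\rho'\xrightarrow{v}\bar\rho\xrightarrow{c}\pi$ has $c$ cartesian, so $(\bar\rho,c)$ is an object of $\Set^{\down}_{/_{\ulcorner}\pi}$; set $Lh:=(\bar\rho,c)$, with unit $v$. For any cartesian $\rho''\to\pi$, the bijection $\Hom_{/\pi}(\bar\rho,\rho'')\xrightarrow{\ \sim\ }\Hom_{/\pi}(\rho',\rho'')$ given by precomposition with $v$ is exactly the orthogonality of $v$ against $\rho''\to\pi$ applied to the evident square, so $L$ is left adjoint to the inclusion. (This is the general fact that for an orthogonal factorization system $(\mathcal{L},\mathcal{R})$ on $\mathcal{C}$ the full subcategory $\mathcal{R}/X\subseteq\mathcal{C}/X$ is reflective with reflector computed by factorization; it is formal once the factorization system is available.)

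The step that needs care is left-exactness of $L$. The key is that a cartesian map to $\pi\colon E\to B$ is determined by its base leg, so there is an equivalence $\Set^{\down}_{/_{\ulcorner}\pi}\simeq\Set_{/B}$ sending a cartesian map to its base component $\cod\rho'\to B$, with inverse pulling $\pi$ back along a map into $B$; and under this equivalence $L$ becomes the functor $\Set^{\down}_{/\pi}\to\Set_{/B}$ obtained by slicing $\cod\colon\Set^{\down}\to\Set$ over $\pi$. Since $\cod$ preserves all limits, its slice preserves finite limits, and the equivalence $\Set_{/B}\simeq\Set^{\down}_{/_{\ulcorner}\pi}$ is left exact; hence $L$ is left exact. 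Combining, the inclusion $\Set^{\down}_{/_{\ulcorner}\pi}\hookrightarrow\Set^{\down}_{/\pi}$ is a fully faithful functor between Grothendieck toposes --- $\Set^{\down}_{/\pi}$ is a slice of the presheaf topos $\Set^{\down}$, and $\Set^{\down}_{/_{\ulcorner}\pi}\simeq\Set_{/B}$ --- with a left-exact left adjoint, i.e.\ a subtopos inclusion.

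For the corollary I would transport along equivalences already established. The equivalence $\ext\colon\Set^{\down}\xrightarrow{\ \sim\ }\Dir$ of \cref{thm:dirichlet.set.characterization} induces equivalences on slices $\Set^{\down}_{/\pi}\simeq\Dir_{/\ext_{\pi}}$, and for a Dirichlet functor $D$ with $\pi:=D!_0$ we have $\ext_{\pi}\cong D$; moreover \cref{prop:set.dirichlet.cartesian.iff.bundle.cartesian} matches $\Set^{\down}_{/_{\ulcorner}\pi}$ with $\Dir_{\ulcorner/D}$ compatibly with the two inclusions. Since subtopos inclusions are preserved by equivalence, $\Dir_{\ulcorner/D}\hookrightarrow\Dir_{/D}$ is a subtopos inclusion. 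I expect the only genuine obstacle to be the bookkeeping that pins down $L$ as the sliced codomain functor; once that identification is made precise, left-exactness --- and hence the whole statement --- follows immediately.
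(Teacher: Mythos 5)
Your proposal is correct and follows essentially the same route as the paper: the reflector is the vertical/cartesian factorization (unit the universal map into the pullback, counit an isomorphism by uniqueness of pullbacks), and left exactness comes down to pullbacks commuting with finite limits. Your only real variation is packaging the left-exactness step as the identification $\Set^{\down}_{/_{\ulcorner}\pi}\simeq\Set_{/B}$ with $L$ the sliced codomain functor, which is a clean way of making precise the paper's one-line appeal to commutation of pullbacks with limits.
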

\begin{proof}
We have displayed both the action of the left adjoint and its unit --- the
universal map into the pullback. The
counit is always an isomorphism since pullbacks are unique up to unique
isomorphism.

That this is lex follows quickly from the fact that taking pullbacks commutes
with taking (finite) limits. 
\end{proof}

\printbibliography

\end{document}